\definecolor{uuuuuu}{rgb}{0.26666666666666666,0.26666666666666666,0.26666666666666666}
\definecolor{xdxdff}{rgb}{0.49019607843137253,0.49019607843137253,1.}
\definecolor{ffqqqq}{rgb}{1.,0.,0.}
\definecolor{ffqqqq}{rgb}{1.,0.,0.}
\definecolor{ffxfqq}{rgb}{1.,0.4980392156862745,0.}
\definecolor{uuuuuu}{rgb}{0.26666666666666666,0.26666666666666666,0.26666666666666666}
\definecolor{qqwuqq}{rgb}{0.,0.39215686274509803,0.}
\definecolor{zzttqq}{rgb}{0.6,0.2,0.}
\definecolor{xdxdff}{rgb}{0.49019607843137253,0.49019607843137253,1.}
\definecolor{qqqqff}{rgb}{0.,0.,1.}
\definecolor{cqcqcq}{rgb}{0.7529411764705882,0.7529411764705882,0.7529411764705882}
\definecolor{sqsqsq}{rgb}{0.12549019607843137,0.12549019607843137,0.12549019607843137}
\theoremstyle{plain}
\newtheorem{theorem}[subsection]{Theorem}
\newtheorem{corollary}[subsection]{Corollary}
\newtheorem{lemma}[subsection]{Lemma}
\newtheorem{prop}[subsection]{Proposition}
\theoremstyle{definition}
\newtheorem{defi1}[subsection]{Definition}
\newtheorem{remark}[subsection]{Remark}
\newcommand{\uu}{\cup}
\newcommand{\UU}{\bigcup}
\newcommand{\ci}{\subseteq}
\newcommand{\sci}{\subset}
\newcommand{\es}{\emptyset}
\newcommand{\set}[1]{\{#1\}}
\newcommand{\ga}{\alpha}
\newcommand{\gb}{\beta}
\newcommand{\gd}{\delta}
\renewcommand{\gg}{\gamma}
\newcommand{\gk}{\kappa}
\newcommand{\gs}{\sigma}
\newcommand{\gt}{\tau}
\newcommand{\tbf}{\textbf}
\newcommand{\tit}{\textit}
\newcommand{\D}[1]{\mathbb{#1}}
\newcommand{\te}{\text}
\begin{document}
\title{Constrained quantization for the Cantor distribution with a family of constraints}

\author{$^1$Megha Pandey}
 \author{$^2$Mrinal K. Roychowdhury}

\address{$^{1}$Department of Mathematical Sciences \\
Indian Institute of Technology (Banaras Hindu University)\\
Varanasi, 221005, India.}
\address{$^{2}$School of Mathematical and Statistical Sciences\\
University of Texas Rio Grande Valley\\
1201 West University Drive\\
Edinburg, TX 78539-2999, USA.}

\email{$^1$meghapandey1071996@gmail.com, $^2$mrinal.roychowdhury@utrgv.edu}

\subjclass[2010]{Primary 28A80; Secondary 94A34, 60Exx.}
\keywords{Cantor distribution, constrained quantization error, optimal sets of $n$-points, constrained quantization dimension, constrained quantization coefficient}

\date{}
\maketitle

\pagestyle{myheadings}\markboth{M. Pandey and M.K. Roychowdhury}{Constrained quantization for the Cantor distribution with a family of constraints}
\begin{abstract}
In this paper, for a given family of constraints and the classical Cantor distribution we determine the constrained optimal sets of $n$-points, $n$th constrained quantization errors for all positive integers $n$. We also calculate the constrained quantization dimension and the constrained quantization coefficient, and see that the constrained quantization dimension $D(P)$ exists as a finite positive number, but the $D(P)$-dimensional constrained quantization coefficient does not exist. 

\end{abstract}

\section{Introduction}

Constrained quantization for a Borel probability measure refers to the idea of estimating a given probability by a discrete probability with a finite number of supporting points lying on a specific set. The specific set is known as the constraint of the constrained quantization. A quantization without a constraint is known as an unconstrained quantization, which traditionally in the literature is known as quantization. Constrained quantization has recently been introduced by Pandey and Roychowdhury (see \cite{PR1, PR2}). Recently, they have also introduced the conditional quantization in both constrained and unconstrained quantization (see \cite{PR4}). For some follow up papers in the direction of constrained quantization, one can see \cite{PR3, BCDRV}). With the introduction of constrained quantization, quantization now has two classifications: constrained quantization and unconstrained quantization. 
For unconstrained quantization and its applications one can see \cite{DFG, DR, GG, GL, GL1, GL2, GL3, GN, KNZ, P, P1, R1, R2, R3, Z1, Z2}. constrained quantization has many significant real world applications. Constrained quantization is greatly useful in radiation therapy of cancer treatment. In radiation therapy, to make sure that the radiation does not directly hit the region of good cells one can use the constrained quantization technique. Constrained quantization is also useful in sending and getting signals from a certain region with minimum distortion using a fixed number of towers installed in a different region. 
 
\begin{defi1}\label{Vr}
Let $P$ be a Borel probability measure on $\D R^2$ equipped with a metric $d$ induced by the Euclidean norm $\|\cdot\|$ on $\D R^2$. 
Let $\set{S_j\ci \D R^2: j\in \D N}$ be a family of closed sets with $S_1$ nonempty. Then, for $n\in \mathbb{N}$, the \tit {$n$th constrained quantization
error} for $P$ with respect to the family of constraints $\set{S_j\ci \D R^2: j\in \D N}$, is defined as
\begin{equation} \label{EqVr}
V_{n}:=V_{n}(P)=\inf \Big\{\int \mathop{\min}\limits_{a\in\ga} d(x, a)^2 dP(x) : \ga \ci \UU_{j=1}^nS_j, ~ 1\leq  \text{card}(\ga) \leq n \Big\},
\end{equation}
where $\te{card}(A)$ represents the cardinality of the set $A$.
\end{defi1}
The number 
\begin{equation*}
  V(P; \ga):= \int \mathop{\min}\limits_{a\in\ga} d(x, a)^2 dP(x)
\end{equation*}
is called the distortion error for $P$ with respect to a set $\ga \ci \D R^2$. Write $V_{\infty}(P):=\mathop{\lim}\limits_{n\to \infty} V_{n}(P)$.  
Then, the number $D(P)$ defined by 
 \[D(P):=\mathop{\lim}\limits_{n\to \infty} \frac{2\log n}{-\log (V_{n}(P)-V_{\infty}(P))},  \]
if it exists, is called the \tit{constrained quantization dimension} of $P$. 
For any $\gk>0$, the  number  
\begin{equation} \label{eq00100} \lim_{n\to \infty} n^{\frac 2 \gk}  (V_{n}(P)-V_{\infty}(P)),\end{equation} if it exists, is called the \tit{$\gk$-dimensional constrained quantization coefficient} for $P$.  
 Let us take the family $\set{S_j : j\in \D N}$ of constraints, that occurs in  \eqref{EqVr} as follows:
\begin{equation} \label{eq000} S_j=\set{(x, y) : -\frac 1 j\leq x\leq 1 \te{ and } y=x+\frac 1 j } 
\end{equation}
for all $j\in \D N$. 
Let $T_1, T_2 : \mathbb R \to \mathbb R$ be two contractive similarity mappings such that
$T_1(x)=\frac 13 x$ and $ T_2 (x)=\frac 13 x +\frac 23$. Then, there exists a unique Borel probability measure $P$
on $\D R$ such that
$P=\frac 12 P\circ T_1^{-1}+\frac 12 P\circ T_2^{-1}$, where $P\circ T_i^{-1}$ denotes the image measure of $P$ with respect to
$S_i$ for $i=1, \,2$ (see \cite{H}). If $k\in \D N$, and $\gs:=\gs_1\gs_2 \cdots \gs_k \in \{ 1, 2\}^k$, then we call $\gs$ a word of length $k$ over the alphabet $I:=\set{1, 2}$, and denote it by
$|\gs|:=k$. By $I^\ast$, we denote the set of all words including the empty word $\es$. Notice that the empty word has length zero. For any word $\gs:=\gs_1\gs_2 \cdots \gs_k \in I^\ast$,  we write \[T_\gs:=T_{\gs_1}\circ \cdots \circ T_{\gs_k} \text{ and } J_\gs:=T_\gs([0, 1]).\]
Then, the set $C:=\bigcap_{k\in \mathbb N} \bigcup_{\gs \in \{1, 2\}^k} J_\gs$ is known as the \textit{Cantor set} generated by the
two mappings $T_1$ and $T_2$, and equals the support of the probability measure $P$, where $P$ can be written as
 \[P=\sum_{\gs\in \set{1, 2}^k} \frac 1 {2^k}  P\circ T_\gs^{-1}.\]
For this probability measure $P$, Graf and Luschgy determined the optimal sets of $n$-means and the $n$th quantization errors for all $n\in \D N$ (see \cite{GL2}) in unconstrained scenario. They also showed that the unconstrained quantization dimension of the measure $P$ exists and equals $\frac{\log 2}{\log 3}$, which is the Hausdorff dimension of the Cantor set $C$, and the unconstrained quantization coefficient does not exist. In fact, in \cite{GL2}, they showed that the lower and the upper quantization coefficients exist as finite positive numbers. 

In this paper, with respect to the family of constraints $\set{S_j : j\in \D N}$ for the Cantor distribution $P$ we determine the constrained optimal sets of $n$-points and the $n$th constrained quantization errors for all positive integers $n$. We further show that the constrained quantization dimension of the Cantor distribution $P$ exists and equals two. Moreover, the value of the constrained quantization coefficient comes as infinity. From the work in this paper, we see that the constrained quantization dimension and the constrained quantization coefficient for the classical Cantor distribution depend on the family of constraints. 
 
\section{Preliminaries}
In this section, we give some basic notations and definitions which we have used throughout this paper. 
As defined in the previous section, let $I:=\{1, 2\} $ be an alphabet. For any two words $\gs:=\gs_1\gs_2\cdots \gs_k$ and
$\tau:=\tau_1\tau_2\cdots \tau_\ell$ in $I^*$, by
$\gs\tau:=\gs_1\cdots \gs_k\tau_1\cdots \tau_\ell$, we mean the word obtained from the concatenation of the two words $\gs$ and $\tau$. For $\gs, \gt\in I^\ast$, $\gs$ is called {\it an extension of} $\gt$ if $\gs=\gt x$ for some word $x\in I^\ast$. The mappings $T_i:\D R \to \D R,\ 1\leq i \leq 2, $ such that $T_1(x)=\frac 13x$ and $T_2(x)=\frac 13x+\frac 23$ are the generating maps of the Cantor set $C $, which is the support of the probability measure $P$ on $\D R$ given by $P=\frac 12 P\circ T_1^{-1}+\frac 12 P\circ T_2^{-1}$. For $\gs:=\gs_1\gs_2 \cdots\gs_k \in I^k$, write 
$J_\gs=T_\gs [0, 1]$, where $T_\gs:=T_{\gs_1}\circ T_{\gs_2}\circ\cdots \circ T_{\gs_k}$ is a composition mapping. Notice that $J:=J_\es=T_\es[0, 1]=[0,1]$. 
Then, for any $k\in \D N$, as mentioned before, we have 
\[C=\bigcap_{k\in \mathbb N} \bigcup_{\gs \in I^k} J_\gs \te{ and  } P=\sum_{\gs \in I^k}\frac 1 {2^k} P\circ T_\gs^{-1}.\]
The elements of the set $\{J_\gs : \gs \in I^k \}$ are the $2^k$ intervals in the $k$th level in the construction of the Cantor set $C$, and are known as the {\it basic intervals at the $k$th level.}  The intervals $J_{\gs 1}$, $J_{\gs 2}$, into which $J_\gs$ is split up at the $(k+1)$th level are called the {\it children of $J_\gs$.}

With respect to a finite set $\ga \sci \D R^2$, by the \tit{Voronoi region} of an element $a\in \ga$, it is meant the set of all elements in $\D R^2$ which are nearest to $a$ among all the elements in $\ga$, and is denoted by $M(a|\ga)$.
Let 
\begin{equation} \label{Me00} \rho: \D R \times \D R^2 \to [0, \infty) \te{ such that } \rho(x, (a, b))=(x-a)^2 +b^2,\end{equation} 
where $x\in \D R$ and $(a, b) \in \D R^2$, which defines a nonnegative real-valued function on $\D R \times \D R^2$. Notice that the function $\rho$ gives the squared Euclidean distance between an element in $\D R$ and an element in $\D R^2$. Let $\pi: \D R^2 \to \D R$ such that $\pi(a, b)=a$ for any $(a, b) \in \D R^2$ denote the projection mapping. For a random variable $X$ with distribution $P$, let $E(X)$ represent the expected value, and $V:=V(X)$ represent the variance of $X$. 

The following lemmas are well-known (see \cite{GL2}).  
\begin{lemma} \label{lemma1}
Let $f : \mathbb R \to \mathbb R^+$ be Borel measurable and $k\in \mathbb N$. Then
\[\int f dP=\sum_{\gs \in \{1, 2\}^k} p_\gs \int f \circ S_\gs dP.\]
\end{lemma}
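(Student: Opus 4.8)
The identity is the $k$-fold self-similarity (or change-of-variables) formula for the invariant measure $P$, and the plan is to obtain it from the one-step relation $P=\frac12 P\circ T_1^{-1}+\frac12 P\circ T_2^{-1}$ by iteration, followed by a single application of the transformation theorem for image measures. Throughout, $S_\gs$ in the statement is understood as the composition map $T_\gs=T_{\gs_1}\circ\cdots\circ T_{\gs_k}$, and $p_\gs=2^{-|\gs|}$.

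First, I would establish the measure-level identity
\[
P=\sum_{\gs\in I^k} p_\gs\, P\circ T_\gs^{-1}
\]
by induction on $k$. The base case $k=1$ is precisely the defining self-similarity of $P$. For the inductive step, I assume the identity holds at level $k$ and rewrite each summand $P\circ T_\gs^{-1}$ using the one-step relation: for a Borel set $A$, applying $P(B)=\sum_{j\in I}p_j\,P(T_j^{-1}(B))$ with $B=T_\gs^{-1}(A)$ gives
\[
\big(P\circ T_\gs^{-1}\big)(A)=\sum_{j\in I} p_j\, P\big(T_j^{-1}(T_\gs^{-1}(A))\big)=\sum_{j\in I} p_j\, P\big(T_{\gs j}^{-1}(A)\big),
\]
where I use $T_{\gs j}=T_\gs\circ T_j$, hence $T_j^{-1}\circ T_\gs^{-1}=T_{\gs j}^{-1}$, together with $p_\gs p_j=p_{\gs j}$. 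Substituting and relabeling $\gs j\in I^{k+1}$ yields the identity at level $k+1$.

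Second, with this identity in hand, I would invoke the standard transformation theorem for image measures: for any Borel measurable $f\geq 0$ and any Borel map $T$, one has $\int f\, d(P\circ T^{-1})=\int (f\circ T)\, dP$. Applying it to each $T_\gs$ and using linearity of the integral over the finite index set $I^k$ gives
\[
\int f\, dP=\int f\, d\Big(\sum_{\gs\in I^k} p_\gs\, P\circ T_\gs^{-1}\Big)=\sum_{\gs\in I^k} p_\gs\int f\, d(P\circ T_\gs^{-1})=\sum_{\gs\in I^k} p_\gs\int f\circ T_\gs\, dP,
\]
which is the asserted formula.

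There is no genuine obstacle here. Because $f\geq 0$ and $I^k$ is finite, the measurability of each $f\circ T_\gs$ and the interchange of the finite sum with the integral are immediate, so no integrability or monotone/dominated convergence hypotheses are required. The only point demanding care is the bookkeeping in the inductive step, namely correctly identifying $T_j^{-1}\circ T_\gs^{-1}$ with $T_{\gs j}^{-1}$ and matching the weights $p_\gs p_j=p_{\gs j}$; this is routine once the convention $T_{\gs_1\cdots\gs_k}=T_{\gs_1}\circ\cdots\circ T_{\gs_k}$ is kept straight.
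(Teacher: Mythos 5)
Your proof is correct and complete: the induction on $k$ for the measure identity $P=\sum_{\gs\in I^k}p_\gs\,P\circ T_\gs^{-1}$ (with the bookkeeping $T_j^{-1}\circ T_\gs^{-1}=T_{\gs j}^{-1}$ and $p_\gs p_j=p_{\gs j}$ handled correctly), followed by the transformation theorem for image measures, is exactly the standard argument. Note that the paper itself offers no proof to compare against --- it states this lemma as ``well-known'' with a citation to Graf--Luschgy --- so your write-up in fact supplies what the paper omits, and you also correctly resolved the paper's notational slip of writing $S_\gs$ and $p_\gs$ (which it never defines in this context) as $T_\gs$ and $2^{-|\gs|}$.
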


 \begin{lemma} \label{lemma2}
Let $ X$ be a random variable with probability distribution $P.$  Then, $E(X)= \frac 12  \te{ and } V:=V(X)=E\|X-\frac 1 2 \|^2=E(X-\frac 12)^2=\frac 1 8.$ Moreover, for any $x_0\in \D R$, we have 
\[\int (x-x_0)^2 dP(x)=V(X)+(x-\frac 12)^2.\]
\end{lemma}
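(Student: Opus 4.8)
The plan is to exploit the self-similarity identity of Lemma~\ref{lemma1} with $k=1$, namely $\int f\,dP=\frac12\int f\circ T_1\,dP+\frac12\int f\circ T_2\,dP$, which reduces each moment computation to a single linear equation in that moment. First I would apply this identity to $f(x)=x$. Since $T_1(x)=\frac13 x$ and $T_2(x)=\frac13 x+\frac23$, the right-hand side becomes $\frac12\big(\frac13 E(X)\big)+\frac12\big(\frac13 E(X)+\frac23\big)=\frac13 E(X)+\frac13$, so that $E(X)=\frac13 E(X)+\frac13$, and solving this gives $E(X)=\frac12$ at once. As a by-product I record the identity $\int(x-\frac12)\,dP=E(X)-\frac12=0$, which is used repeatedly below.

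For the variance I would apply the same identity to $f(x)=(x-\frac12)^2$. The key algebraic step is to write, for each $i\in\{1,2\}$, the affine decomposition $T_i(x)-\frac12=\frac13\big(x-\frac12\big)+\big(T_i(\frac12)-\frac12\big)$, where a direct computation gives $T_1(\frac12)-\frac12=-\frac13$ and $T_2(\frac12)-\frac12=\frac13$. Expanding the square and integrating term by term, the cross term is a multiple of $\int(x-\frac12)\,dP$, which vanishes by the first step; hence each branch contributes exactly $\frac19 V+\big(T_i(\frac12)-\frac12\big)^2$. Averaging the two branches yields the self-referential equation $V=\frac19 V+\frac12\big(\frac19+\frac19\big)=\frac19 V+\frac19$, that is $\frac89 V=\frac19$, whence $V=\frac18$.

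Finally, for the ``moreover'' statement I would use the standard bias--variance decomposition: writing $(x-x_0)^2=(x-\frac12)^2+2(x-\frac12)(\frac12-x_0)+(\frac12-x_0)^2$ and integrating against $P$, the middle term again vanishes because $\int(x-\frac12)\,dP=0$, leaving $\int(x-x_0)^2\,dP=V(X)+(x_0-\frac12)^2$. (The displayed statement appears to contain a typographical slip: the final term should read $(x_0-\frac12)^2$ rather than $(x-\frac12)^2$, since the left-hand side is a constant in $x_0$.)

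None of these steps presents a genuine obstacle, as the whole argument rests on the single contraction identity of Lemma~\ref{lemma1}. The only point requiring real care is the variance computation: one must carry out the affine decomposition of $T_i(x)-\frac12$ correctly and verify that the cross term integrates to zero \emph{before} solving the fixed-point equation $\frac89 V=\frac19$, since it is precisely the vanishing of the first moment about $\frac12$ that makes the self-referential relation close up cleanly.
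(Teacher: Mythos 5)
Your proof is correct. The paper itself gives no proof of this lemma---it is stated as ``well-known'' with a citation to \cite{GL2}---and your argument via the self-similarity identity of Lemma~\ref{lemma1} (fixed-point equations for the first moment and for the variance, with the cross term killed by $\int(x-\frac12)\,dP=0$) is exactly the standard derivation used in that reference. You are also right that the displayed identity contains a typographical slip: the last term should be $(x_0-\frac12)^2$, not $(x-\frac12)^2$.
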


\begin{remark}
For words $\gb, \gg, \cdots, \gd$ in $I^\ast$, by $a(\gb, \gg, \cdots, \gd)$ we mean the conditional expectation of the random vector $ X$ given $J_\gb\uu J_\gg \uu\cdots \uu J_\gd,$ i.e.,
\begin{equation*} \label{eq45} a(\gb, \gg, \cdots, \gd)=E(X : X\in J_\gb \uu J_\gg \uu \cdots \uu J_\gd)=\frac{1}{P(J_\gb\uu \cdots \uu J_\gd)}\int_{J_\gb\uu \cdots \uu J_\gd} x \, dP.
\end{equation*}
Recall Lemma \ref{lemma1}. For each $\gs \in I^\ast$, since $T_\gs$ is a similarity mapping, we have
\begin{align*}
a(\gs)&=E(X : X \in J_\gs) =\frac{1}{P(J_\gs)} \int_{J_\gs} x \,dP=\int_{J_\gs} x  d(P\circ T_\gs^{-1})=\int T_\gs (x) \, dP\\
&=E(T_\gs(X))=T_\gs(E(X))=T_\gs(\frac 12).
\end{align*}
\end{remark}
In this paper, we investigate the constrained quantization for the family of constraints given by 
 \begin{equation} \label{eq000} S_j=\set{(x, y) : -\frac 1 j\leq x\leq 1 \te{ and } y=x+\frac 1 j } \te{ for all } j\in\D N, 
\end{equation}
i.e., the constraints $S_j$ are the line segments joining the points $(-\frac 1 j, 0)$ and $(1, 1+\frac 1 j)$ which are parallel to the line $y=x$.
The perpendicular on a constraint $S_j$ passing through a point $(x, x+\frac 1 j)\in S_j$ intersects the real line at the point $2x+\frac 1 j$ if $-\frac 1 j\leq x\leq 1$; and it intersects $J$ if 
$0\leq 2x+\frac 1 j\leq 1$, i.e., if 
\begin{equation} \label{eq0000} -\frac 1 {2j}\leq x\leq\frac 1 2-\frac 1 {2j}.
\end{equation} 
 Thus, for all $j\in \D N$, there exists a one-one correspondence between the elements $(x, x+\frac 1 j)$ on $S_j$ and the elements $2x+\frac 1 j$ on the real line if $-\frac 1 j\leq x\leq 1$. Thus, for all $j\in \D N$, there exist bijective mappings $U_j$ such that
 \begin{equation} \label{eq0001} 
U_j(x, x+\frac 1j)=2x+\frac 1 j \te{ and } U_j^{-1}(x)=\Big(\frac 1 2 (x-\frac 1 j), \frac 1 2 (x-\frac 1 j)+\frac 1 j\Big), \end{equation}
where $-\frac 1 j\leq x\leq 1$.

The following lemma plays an important role in the paper.

\begin{lemma} \label{lemma0}
Let $\ga_n\ci \mathop{\uu}\limits_{j=1}^n S_j$ be a constrained optimal set of $n$-points for $P$ such that
\[\ga_n:=\set{(a_j, b_j) : 1\leq j\leq n},\]
where $a_1<a_2<a_3<\cdots<a_n$ and $\pi$ be the projection mapping. Then, $(a_j, b_j)=U_n^{-1}(E(X :  X\in \pi(M((a_j, b_j)|\ga_n))))$,
where $M((a_j, b_j)|\ga_n)$ are the Voronoi regions of the elements $(a_j, b_j)$ with respect to the set $\ga_n$ for $1\leq j\leq n$.
 \end{lemma}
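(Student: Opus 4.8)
The plan is to prove the two facts packaged in the stated identity: that each point of an optimal set is the $\rho$-closest point of its constraint to the conditional mean of its Voronoi region, and that this constraint is always the last one, $S_n$, which is why the formula features $U_n^{-1}$ rather than a map depending on $j$. Throughout write $M_j:=M((a_j,b_j)|\ga_n)$, $p_j:=P(\pi(M_j))$ and $m_j:=E(X:X\in\pi(M_j))$; here $m_j$ is well defined because each Voronoi region of an optimal $n$-point set carries positive $P$-mass (a standard fact for a measure of infinite support, which forces $V_n<V_{n-1}$).

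First I would reduce the global problem to a pointwise one by the usual perturbation argument. Since $\ga_n$ is optimal and the sets $\pi(M_1),\dots,\pi(M_n)$ partition the support of $P$ up to a $P$-null set, moving a single point $(a_j,b_j)$ to any other $(a,b)\in\UU_{k=1}^nS_k$ while freezing the remaining points cannot decrease the distortion; evaluating the perturbed configuration on the \emph{old} Voronoi regions and cancelling the common summands gives
\[\int_{\pi(M_j)}\rho(x,(a_j,b_j))\,dP(x)\le\int_{\pi(M_j)}\rho(x,(a,b))\,dP(x)\q\te{for all }(a,b)\in\UU_{k=1}^nS_k.\]
Thus $(a_j,b_j)$ minimizes $(a,b)\mapsto\int_{\pi(M_j)}\rho(x,(a,b))\,dP(x)$ over the union of constraints.

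Next I would make this minimization transparent. Expanding $\rho$ and using that $m_j$ is the conditional mean, a one-line completion of the square gives
\[\int_{\pi(M_j)}\rho(x,(a,b))\,dP(x)=\int_{\pi(M_j)}(x-m_j)^2\,dP(x)+p_j\big[(m_j-a)^2+b^2\big],\]
so, up to the additive constant and the positive factor $p_j$, the objective is exactly the squared Euclidean distance $\|(m_j,0)-(a,b)\|^2$. Minimizing $(a,b)$ over a single segment $S_k$ therefore amounts to finding the point of $S_k$ nearest $(m_j,0)$. Parametrizing $S_k$ as $(t,t+\tfrac1k)$ and differentiating, the orthogonal foot occurs at $t=\tfrac12(m_j-\tfrac1k)$, that is at $U_k^{-1}(m_j)$ of \eqref{eq0001}; since $m_j\in[0,1]$ one checks at once that $-\tfrac1k\le t\le1$, so this foot really lies on the segment and not at an endpoint. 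Its squared distance to $(m_j,0)$ is the squared distance from $(m_j,0)$ to the line $y=x+\tfrac1k$, namely $\tfrac12(m_j+\tfrac1k)^2$.

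Finally I would optimize over the choice of constraint $k\in\{1,\dots,n\}$. Because $m_j\ge0$, the quantity $\tfrac12(m_j+\tfrac1k)^2$ is strictly decreasing in $k$, so the minimum is attained uniquely at $k=n$; geometrically $S_n$ is the member of the family lying closest to the real line, where $P$ is carried. Hence every optimal point sits on $S_n$ and equals the foot $U_n^{-1}(m_j)=U_n^{-1}\big(E(X:X\in\pi(M_j))\big)$, as claimed. I expect the two genuinely delicate points to be the positivity of $P(\pi(M_j))$ (needed for $m_j$ to exist) and the check that the orthogonal foot stays inside $S_k$; the monotonicity in $k$, which is what collapses all $n$ constraints to the single segment $S_n$ and produces $U_n^{-1}$, is the conceptual core but is computationally immediate.
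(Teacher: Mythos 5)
Your proof is correct, and it reaches the paper's conclusion by a cleaner and somewhat more general route. The paper does not isolate a perturbation step: it directly minimizes the distortion contributed by a point over its Voronoi region, expands that integral over the basic intervals $J_{\gs^{(j)}}$ contained in the region using the self-similarity of $P$ (Lemma \ref{lemma1}), and completes the square term by term, splitting each summand into a component along the line $S_t$ and a component perpendicular to it; minimizing over $t$ and then over $a_q$ yields the weighted average of the $T_{\gs^{(j)}}(\frac 12)$, recognized as the conditional mean. You instead localize explicitly (the frozen-Voronoi-cell perturbation), complete the square once about the conditional mean $m_j$, and reduce everything to elementary geometry: the nearest point of $S_k$ to $(m_j,0)$ is the perpendicular foot $U_k^{-1}(m_j)$, interior to the segment because $0\leq m_j\leq 1$, at squared distance $\frac 12\big(m_j+\frac 1k\big)^2$, which is strictly decreasing in $k$ and so forces $k=n$. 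Your version buys three things: it never uses the Cantor/IFS structure, so it proves the lemma for any Borel probability measure supported in $[0,1]$ whose optimal cells carry positive mass; it makes explicit the localization step and the interiority check that the paper only addresses separately (Remark \ref{remM1}); and it tightens the paper's tersest step --- the assertion that ``both the expressions are minimum if $t=n$'' --- which silently relies on the fact that the minimum over $a_q$ of the along-the-line term is a weighted variance independent of $t$, a point your segment-by-segment minimization makes transparent. What the paper's computation buys in exchange is that its answer emerges directly in the weighted-average-over-basic-intervals form that matches how the conditional means $a(\gs)$ are handled in the rest of the paper. The one assumption you share with the paper is the positivity of $P(\pi(M_j))$ for an optimal set, which neither proof establishes in this constrained setting, though you at least flag it.
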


\begin{proof}
Let $\ga_n:=\set{(a_j, b_j): 1\leq j\leq n}$, as given in the statement of the lemma, be a constrained optimal set of $n$-points. Take any $(a_q, b_q)\in \ga_n$. Since $\ga_n\ci \mathop{\uu}\limits_{j=1}^n S_j$, we can assume that $(a_q, b_q) \in S_t$ for some $1\leq t\leq n$. Since the Voronoi region of $(a_q, b_q)$, i.e., $M((a_q, b_q)|\ga_n)$ has positive probability, $M((a_q, b_q)|\ga_n)$ contains some basic intervals from $J$ that generates the Cantor set $C$. Let $J_{\gs^{(j)}}$, where $1\leq j\leq k$ for some positive integer $k$, be all the basic intervals that are contained in $M((a_q, b_q)|\ga_n)$. Now, the distortion error contributed by $(a_q, b_q)$ in its Voronoi region $M((a_q, b_q)|\ga_n)$ is given by
\begin{align*}
&\int_{\pi(M((a_q, b_q)|\ga_n))}\rho(x, (a_q, b_q)) \,dP\\&=\sum_{j=1}^k \frac 1{2^{\ell(\gs^{(j)})}} \int_{J_{\gs^{(j)}}}\rho(x, (a_q, b_q)) \,d(P\circ T_{\gs^{(j)}}^{-1})\\
&=\sum_{j=1}^k\frac 1{2^{\ell(\gs^{(j)})}} \frac 1{9^{\ell(\gs^{(j)})}}V+\sum_{j=1}^k\frac 1{2^{\ell(\gs^{(j)})}}\rho(T_{\gs^{(j)}}(\frac 12), (a_q, a_q+\frac 1 t))\\
&=\sum_{j=1}^k\frac 1{2^{\ell(\gs^{(j)})}} \frac 1{9^{\ell(\gs^{(j)})}}V+\sum_{j=1}^k\frac 1{2^{\ell(\gs^{(j)})}}\Big((T_{\gs^{(j)}}(\frac 12)-a_q)^2+ (a_q+\frac 1{t})^2\Big)\\
&=\sum_{j=1}^k\frac 1{2^{\ell(\gs^{(j)})}} \frac 1{9^{\ell(\gs^{(j)})}}V+\sum_{j=1}^k\frac 1{2^{\ell(\gs^{(j)})}}\Big(2a_q^2-2a_q(T_{\gs^{(j)}} (\frac 12)-\frac 1 t) +(T_{\gs^{(j)}} (\frac 12))^2+\frac 1{t^2}\Big)\\
&=\sum_{j=1}^k\frac 1{2^{\ell(\gs^{(j)})}} \frac 1{9^{\ell(\gs^{(j)})}}V+\sum_{j=1}^k\frac 1{2^{\ell(\gs^{(j)})}}\frac 12\Big(\Big(2a_q- (T_{\gs^{(j)}} (\frac 12)-\frac 1 t)\Big)^2 +\Big(T_{\gs^{(j)}} (\frac 12) +\frac 1{t}\Big)^2\Big). 
\end{align*}
Notice that the above expression is minimum if both the expressions 
\[\sum_{j=1}^k\frac 1{2^{\ell(\gs^{(j)})}}\frac 12\Big(2a_q- (T_{\gs^{(j)}} (\frac 12)-\frac 1 t)\Big)^2\te{ and } \sum_{j=1}^k\frac 1{2^{\ell(\gs^{(j)})}}\frac 12  \Big(T_{\gs^{(j)}} (\frac 12) +\frac 1{t}\Big)^2\]
are minimum. Since $1\leq t\leq n$, both the expressions are minimum if $t=n$. Once $t= n$, the first expression can further be minimized if 
\[\sum_{j=1}^k\frac 1{2^{\ell(\gs^{(j)})}} \Big((2a_q+ \frac 1 n)-T_{\gs^{(j)}} (\frac 12)\Big)=0\]
  yielding  \[ 2a_q+\frac 1n =\frac{\sum_{j=1}^k\frac 1{2^{\ell(\gs^{(j)})}}T_{\gs^{(j)}} (\frac 12)}{\sum_{j=1}^k\frac 1{2^{\ell(\gs^{(j)})}}}.\]
Thus, we have \[a_q=\frac 12\Big(\frac{\sum_{j=1}^k\frac 1{2^{\ell(\gs^{(j)})}}T_{\gs^{(j)}} (\frac 12)}{\sum_{j=1}^k\frac 1{2^{\ell(\gs^{(j)})}}}-\frac 1n\Big)\te{ and } b_q=\frac 12\Big(\frac{\sum_{j=1}^k\frac 1{2^{\ell(\gs^{(j)})}}T_{\gs^{(j)}} (\frac 12)}{\sum_{j=1}^k\frac 1{2^{\ell(\gs^{(j)})}}}-\frac 1n\Big)+\frac 1n \]
  implying \[(a_q, b_q)=U_n^{-1}\Big(\frac{\sum_{j=1}^k\frac 1{2^{\ell(\gs^{(j)})}}T_{\gs^{(j)}} (\frac 12)}{\sum_{j=1}^k\frac 1{2^{\ell(\gs^{(j)})}}}\Big)=U_n^{-1}(\pi(E(\tbf X : \tbf X\in M((a_q, b_q)|\ga_n)))).\]
Since $(a_q, b_q)\in \ga_n$ is chosen arbitrarily, the proof of the lemma is complete.  
\end{proof}

 \begin{remark} \label{remM1} 
By \eqref{eq0000} and \eqref{eq0001}, and Lemma~\ref{lemma0}, we can conclude that all the elements in an optimal set of $n$-points must lie on $S_n$ between the two elements $U_n^{-1}(0)$ and $U_n^{-1}(1)$, i.e., between the two elements $(-\frac{1}{2 n},\frac{1}{2 n})$ and $(\frac{n-1}{2 n},\frac{n+1}{2 n})$. If this fact is not true, then the constrained quantization error can be strictly reduced by moving the elements in the optimal set between the elements $(-\frac{1}{2 n},\frac{1}{2 n})$ and $(\frac{n-1}{2 n},\frac{n+1}{2 n})$ on $S_n$, in other words, the $x$-coordinates of all the elements in an optimal set of $n$-points must lie between the two numbers $-\frac{1}{2 n}$ and $\frac{n-1}{2 n}$.
 \end{remark}

\begin{lemma} \label{carl}  We have 
 $1+5+13+17+37 +41 +49 +53 +\cdots \te{ up to } 2^k\te{-terms}=6^k$, and 
 $1^2+5^2+13^2+17^2+37^2+41^2+49^2+53^2+\cdots \te{ up to } 2^k\te{-terms}=2^{k-1}(9^k3-1)$. 
\end{lemma}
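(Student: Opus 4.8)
The plan is to recognize the listed terms as the numerators of the midpoints $T_\gs(\frac 12)$, $\gs\in I^k$, when each is written over the common denominator $2\cdot 3^k$, and then to run a pair of first-order recursions in the level index $k$. First I would check that for $\gs\in I^k$ the point $T_\gs(\frac 12)$ has the form $\frac{c_\gs}{2\cdot 3^k}$ with $c_\gs$ a positive integer, and that the list $\set{c_\gs : \gs\in I^k}$ is exactly $1,5,13,17,37,41,49,53,\ldots$ up to $2^k$ terms. The structural fact driving everything is the passage from level $k$ to level $k+1$: since $T_1(y)=\frac y3$ leaves the numerator unchanged while $T_2(y)=\frac y3+\frac 23$ contributes $\frac 23=\frac{4\cdot 3^k}{2\cdot 3^{k+1}}$, the level-$(k+1)$ list of numerators is obtained from the level-$k$ list $a_1,\ldots,a_{2^k}$ as the union of $\set{a_i}$ and $\set{a_i+4\cdot 3^k}$. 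This reproduces the observed doubling pattern ($\set{1,5}\mapsto\set{1,5,13,17}\mapsto\set{1,5,13,17,37,41,49,53}$), and it is all I need.

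Writing $A_k$ for the first sum over the $2^k$ terms and $B_k$ for the corresponding sum of squares, the doubling identity yields
\begin{align*}
A_{k+1}&=2A_k+4\cdot 6^k,\\
B_{k+1}&=2B_k+8\cdot 3^k A_k+16\cdot 9^k\cdot 2^k,
\end{align*}
with base values $A_0=B_0=1$: the $A$-recursion comes from summing $a_i+4\cdot 3^k$ over the $2^k$ terms, and the $B$-recursion from expanding $(a_i+4\cdot 3^k)^2$ and using that there are $2^k$ terms. Solving the first recurrence (homogeneous part $2^k$, particular solution $6^k$) gives $A_k=6^k$, which is the first identity. Substituting $A_k=6^k$ into the $B$-recursion collapses it to $B_{k+1}=2B_k+24\cdot 18^k$, whose solution (homogeneous part $2^k$, particular solution $\frac 32\,18^k$, fitted by $B_0=1$) is
\[B_k=\tfrac 12\big(3\cdot 18^k-2^k\big)=2^{k-1}\big(3\cdot 9^k-1\big),\]
which is the second identity.

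I expect no genuine obstacle here, as the content is essentially an induction. The one point demanding care is the clean justification of the doubling step, namely that applying $T_2$ to a level-$k$ midpoint shifts its numerator by exactly $4\cdot 3^k$ once everything is expressed over $2\cdot 3^{k+1}$, since the entire argument rests on this. (For the first sum alone one could instead invoke the symmetry of the midpoints about $\frac 12$, giving $\sum_{\gs\in I^k}T_\gs(\frac 12)=2^{k-1}$ and hence $A_k=2\cdot 3^k\cdot 2^{k-1}=6^k$ directly, but the recursion handles both sums uniformly.) In the write-up I would actually avoid solving the recurrences explicitly and instead verify the two closed forms $A_k=6^k$ and $B_k=2^{k-1}(3\cdot 9^k-1)$ by induction on $k$, checking $A_0=B_0=1$ and that these formulas satisfy the displayed recursions.
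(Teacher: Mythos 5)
Your proposal is correct and takes essentially the same approach as the paper: the paper defines $\mathfrak{C}_0=\set{1}$, $\mathfrak{C}_k=\mathfrak{C}_{k-1}\cup(\mathfrak{C}_{k-1}+4\cdot 3^{k-1})$ --- exactly your doubling step --- and derives your two moment recursions $\mathcal{M}_1(k)=2\mathcal{M}_1(k-1)+4\cdot 6^{k-1}$ and $\mathcal{M}_2(k)=2\mathcal{M}_2(k-1)+24\cdot 18^{k-1}$, solving them by unrolling/telescoping rather than by your homogeneous-plus-particular (or inductive) verification. The only cosmetic difference is that you justify the doubling structure via the Cantor midpoints $T_\gs(\frac 12)$ written over the denominator $2\cdot 3^k$, a connection the paper leaves implicit.
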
 

\begin{proof}
For $k\in \mathbb{N}\cup\{0\}$ let us define sets $\mathfrak{C}_0={1}$ and $\mathfrak{C}_k=\mathfrak{C}_{k-1}\cup (\mathfrak{C}_{k-1}+ (3^{k-1}4))$. Then, notice that $\mathfrak{C}_k=\set{1,5,13,17,\ldots \te{up to $2^k$-terms}}$. Let us next define the moment function for the required sum as follows: 
$\mathcal{M}_{m}(k)=\sum_{x\in \mathfrak{C}_{k}}x^m.$
Then, for $m=0$, we get
$\mathcal{M}_0(k)=\sum_{x\in \mathfrak{C}_k}x^0=2^k.$
For $m=1$, we have
\begin{align*}
&\mathcal{M}_1(k)\\
&=\sum_{x\in \mathfrak{C}_k}x=\sum_{x\in \mathfrak{C}_{k-1}}(x+(x+3^{k-1}4))
= \sum_{x\in \mathfrak{C}_{k-1}}x +\sum_{x\in \mathfrak{C}_{k-1}}x +\sum_{x\in \mathfrak{C}_{k-1}} 3^{k-1}4
=2\mathcal{M}_1 (k-1)+2^{k-1}3^{k-1}4\\
&=2^2\mathcal{M}_1(k-2)+2^{k-1}3^{k-2}4+2^{k-1}3^{k-1}4\\
& \quad \vdots\\
&=2^k\mathcal{M}_1(0)+2^{k-1}4+2^{k-1}3\cdot 4+2^{k-1}3^24+\cdots+2^{k-1}3^{k-2}4+2^{k-1}3^{k-1}4\\
& =2^{k}\cdot 1+2^{k-1}4(1+3+3^2+\cdots+3^{k-1}) 
= 2^k+2^{k+1}\left(\frac{3^{k}-1}{2}\right)= 6^k.
\end{align*}
For $m=2$, we have
\begin{align*}
&\mathcal{M}_2(k)\\
&=\sum_{x\in \mathfrak{C}_k} x^2
=\sum_{x\in \mathfrak{C}_{k-1}} (x^2+(x+3^{k-1}4)^2)= \sum_{x\in \mathfrak{C}_{k-1}}x^2+\sum_{x\in \mathfrak{C}_{k-1}} (x^2+2x\cdot 3^{k-1}4+9^{k-1}16)\\
&=2\sum_{x\in \mathfrak{C}_{k-1}} x^2 +3^{k-1}8\sum_{x\in \mathfrak{C}_{k-1}} x+\sum_{x\in \mathfrak{C}_{k-1}} 9^{k-1}16\\
&= 2\mathcal{M}_2(k-1)+3^{k-1}8\mathcal{M}_1(k-1)+2^{k-1}9^{k-1}16 
=2\mathcal{M}_2(k-1)+3^{k-1}6^{k-1}8+18^{k-1}16 \\
&= 2\mathcal{M}_2(k-1)+18^{k-1}24= 2^2\mathcal{M}_2(k-2)+2\cdot 18^{k-2}24+18^{k-1}24 \\
&= 2^3\mathcal{M}_2(k-3)+2^2 18^{k-3}24+18^{k-1}24\\
&   \quad \vdots\\
&= 2^k\mathcal{M}_2(0)+2^{k-1}24+2^{k-2}18\cdot 24+\cdots +2^2 18^{k-3}24+18^{k-1}24\\
&=2^k\cdot 1+2^{k-1}24(1+9+9^2+\cdots+9^{k-1})=2^k+2^{k-1}24\left(\frac{9^k-1}{8}\right)=2^{k-1}(9^k3-1).
\end{align*}
Therefore, $1+5+13+17+37 +41 +49 +53 +\cdots \te{ up to } 2^k\te{-terms}=6^k$, and 
 $1^2+5^2+13^2+17^2+37^2+41^2+49^2+53^2+\cdots \te{ up to } 2^k\te{-terms}=2^{k-1}(3\cdot 9^k-1)$.
 Thus, the proof of the lemma is complete. 
\end{proof} 
 
\begin{defi1}  \label{defi1} 
For $n\in \D N$ with $n\geq 2$, let $\ell(n)$ be the unique natural number with $2^{\ell(n)} \leq n<2^{\ell(n)+1}$. Let $U_n$ be the mappings given by \eqref{eq0001}. For $I\sci \set{1, 2}^{\ell(n)}$ with card$(I)=n-2^{\ell(n)}$ let $\ga_n(I)\ci S_n$ be the set such that
\[\ga_n(I)=\set{U_n^{-1}(a(\gs)) : \gs \in \set{1,2}^{\ell(n)} \setminus I} \uu \set{U_n^{-1}(a(\gs 1)) : \gs \in I} \uu \set {U_n^{-1}(a(\gs 2)) : \gs \in I}.\]
\end{defi1}

\begin{prop} \label{prop0}
Let $\ga_n(I)$ be the set given by Definition \ref{defi1}. Then, the number of such sets is  ${}^{2^{\ell(n)}}C_{n-2^{\ell(n)}}$, and the corresponding distortion error is given by
\begin{equation*}\label{eq11}
V(P; \ga_n(I))=\int\mathop{\min}\limits_{a\in\ga_n(I)} \rho(x, a)\, dP=\frac 1 {18^{\ell(n)}}V\Big(2^{\ell(n)+1}-n+\frac 1 9(n-2^{\ell(n)})\Big)+A,
\end{equation*}
where $V$ is the variance as given by Lemma~\ref{lemma2}, and 
\[A =\sum_{\gs \in \set{1, 2}^{\ell(n)}\setminus I}\frac 1 {2^{\ell(n)}} \rho(a(\gs), U_n^{-1}(a(\gs))) +\sum_{\gs \in I}\frac 1{2^{\ell(n)+1}} \Big( \rho(a(\gs1), U_n^{-1}(a(\gs1)))+\rho(a(\gs2), U_n^{-1}(a(\gs2)))\Big).\]
\end{prop}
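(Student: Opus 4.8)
The plan is to prove the proposition in three stages: count the admissible sets, identify the nearest-point (Voronoi) assignment as the interval assignment, and sum the per-region contributions using the distortion formula already derived in Lemma~\ref{lemma0}.

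First, for the count: a set $\ga_n(I)$ is determined entirely by the subset $I\ci\set{1,2}^{\ell(n)}$ with $\te{card}(I)=n-2^{\ell(n)}$, and distinct choices of $I$ give distinct sets, so their number is ${}^{2^{\ell(n)}}C_{n-2^{\ell(n)}}$. I would also record that
\[\te{card}(\ga_n(I))=\big(2^{\ell(n)}-(n-2^{\ell(n)})\big)+2(n-2^{\ell(n)})=2^{\ell(n)+1}-n+2(n-2^{\ell(n)})=n,\]
so $\ga_n(I)$ is a candidate set of $n$-points, and by \eqref{eq0000}, \eqref{eq0001} and Remark~\ref{remM1} it indeed lies on $S_n$ within the admissible range.

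Second, and this is the crux, I would show that for a data point $x$ on the real line, the nearest element of $\ga_n(I)$ is precisely the one assigned to the basic interval containing $x$, namely $U_n^{-1}(a(\gs))$ when $x\in J_\gs$ with $\gs\in\set{1,2}^{\ell(n)}\sm I$, and $U_n^{-1}(a(\gs i))$ when $x\in J_{\gs i}$ with $\gs\in I$. The key computation, using \eqref{eq0001} and $\rho$ from \eqref{Me00}, is that for any two points $U_n^{-1}(y_1),U_n^{-1}(y_2)$ on $S_n$,
\[\rho(x, U_n^{-1}(y_1))-\rho(x, U_n^{-1}(y_2))=(y_1-y_2)\Big(\tfrac12(y_1+y_2)-x\Big),\]
so that $U_n^{-1}(y_1)$ is strictly closer to $x$ than $U_n^{-1}(y_2)$ exactly when $x$ lies on the $y_1$-side of the midpoint $\frac12(y_1+y_2)$. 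Thus the bijection $U_n$ transports the nearest-point problem on $S_n$ to the ordinary nearest-point problem on the real line for the values $\set{a(\gs):\gs\notin I}\uu\set{a(\gs i):\gs\in I,\ i=1,2}$. Since each $a(\gs)=T_\gs(\tfrac12)$ is the centre of $J_\gs$ and consecutive basic intervals of the Cantor construction are separated by gaps, these $y$-values increase in the left-to-right order of the intervals and the midpoints of consecutive values fall into the separating gaps; hence the Voronoi region of each placed point meets $C=\te{supp}(P)$ exactly in its assigned interval. This is where I expect the main work: one must check the separation carefully for the \emph{mixed} levels $\ell(n)$ and $\ell(n)+1$, i.e.\ verify that splitting a single $J_\gs$ with $\gs\in I$ into its two children $J_{\gs 1},J_{\gs 2}$ does not disturb the alignment with neighbouring unsplit intervals.

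Third, granted this assignment, I would compute $V(P;\ga_n(I))=\int\min_{a\in\ga_n(I)}\rho(x,a)\,dP$ by adding the contribution of each point over its region, invoking the per-region formula established in the proof of Lemma~\ref{lemma0} (a consequence of Lemmas~\ref{lemma1} and~\ref{lemma2}). An unsplit interval $J_\gs$ contributes $\frac{V}{18^{\ell(n)}}+\frac1{2^{\ell(n)}}\rho(a(\gs),U_n^{-1}(a(\gs)))$, while for $\gs\in I$ each child $J_{\gs i}$ contributes $\frac{V}{18^{\ell(n)+1}}+\frac1{2^{\ell(n)+1}}\rho(a(\gs i),U_n^{-1}(a(\gs i)))$. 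Collecting the variance terms over the $2^{\ell(n)+1}-n$ unsplit intervals and the $n-2^{\ell(n)}$ split intervals gives
\[(2^{\ell(n)+1}-n)\frac{V}{18^{\ell(n)}}+(n-2^{\ell(n)})\frac{2V}{18^{\ell(n)+1}}=\frac{V}{18^{\ell(n)}}\Big(2^{\ell(n)+1}-n+\tfrac19(n-2^{\ell(n)})\Big),\]
and collecting the remaining squared-distance terms reproduces exactly the expression $A$, completing the proof. The only genuinely nontrivial ingredient is the Voronoi identification of the second step; once the counts $2^{\ell(n)+1}-n$ and $n-2^{\ell(n)}$ are fixed, the final summation is routine bookkeeping.
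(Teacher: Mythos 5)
Your proposal is correct, and its first and third steps coincide with the paper's own proof: the paper also counts the sets by the ${}^{2^{\ell(n)}}C_{n-2^{\ell(n)}}$ choices of $I$, decomposes the distortion integral into per-interval integrals, evaluates each by self-similarity together with Lemma~\ref{lemma2} (giving $\frac{V}{18^{\ell(n)}}+\frac 1{2^{\ell(n)}}\rho(a(\gs),U_n^{-1}(a(\gs)))$ per unsplit interval and the analogous level-$(\ell(n)+1)$ quantity per child), and collects the variance terms over the $2^{\ell(n)+1}-n$ unsplit and $n-2^{\ell(n)}$ split intervals exactly as you do. The genuine difference is your second step. The paper asserts the equality
\[\int\mathop{\min}\limits_{a\in\ga_n(I)}\rho(x,a)\,dP=\sum_{\gs\in\set{1,2}^{\ell(n)}\sm I}\int_{J_\gs}\rho(x,U_n^{-1}(a(\gs)))\,dP+\sum_{\gs\in I}\Big(\int_{J_{\gs1}}\rho(x,U_n^{-1}(a(\gs1)))\,dP+\int_{J_{\gs2}}\rho(x,U_n^{-1}(a(\gs2)))\,dP\Big)\]
with no justification, i.e., it tacitly assumes that for every $x$ in the support the minimum is attained at the point assigned to the basic interval containing $x$. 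Your midpoint identity $\rho(x,U_n^{-1}(y_1))-\rho(x,U_n^{-1}(y_2))=(y_1-y_2)\big(\tfrac12(y_1+y_2)-x\big)$ is correct and supplies exactly this missing justification; it is in substance the paper's later Proposition~\ref{Me421} (that $U_n$ transports the nearest-point problem on $S_n$ to the ordinary one on the line), which the paper only formulates and invokes afterwards, in Theorem~\ref{Megha0}. So your argument is, if anything, more complete than the printed proof. One suggestion: carry out the mixed-level separation check you flag rather than leaving it as expected work, since it is short. Two adjacent used intervals are separated by a gap of length $g\geq 3^{-\ell(n)-1}$ (siblings) or $g\geq 3^{-\ell(n)}$ (otherwise); when both intervals have the same level the midpoint of their centers is exactly the center of the gap, and in the mixed case (one level-$\ell(n)$ interval next to a level-$(\ell(n)+1)$ child, so $g\geq 3^{-\ell(n)}$) the midpoint is displaced from the gap's center by $\tfrac16\,3^{-\ell(n)}\leq\tfrac g6<\tfrac g2$, hence in every case it lies strictly inside the gap, which is what the Voronoi identification requires.
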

\begin{proof}
If $2^{\ell(n)} \leq n<2^{\ell(n)+1}$, then the subset $I$ can be chosen in ${}^{2^{\ell(n)}}C_{n-2^{\ell(n)}}$ different ways, and so, the number of such sets is given by ${}^{2^{\ell(n)}}C_{n-2^{\ell(n)}}$, and the corresponding distortion error is obtained as
\begin{align*}
&V(P; \ga_n(I))=\int \min_{a\in \ga_n(I)}\rho(x, a)\, dP\\
&=\sum_{\gs \in \set{1, 2}^{\ell(n)}\setminus I}\int_{J_\gs}\rho(x, U_n^{-1}(a(\gs)))\,dP\\
&\qquad +\sum_{\gs \in I} \Big(\int_{J_{\gs1}}\rho(x,U_n^{-1}(a(\gs1)))\,dP +\int_{J_{\gs2}}\rho(x, U_n^{-1}(a(\gs2)))\,dP\Big)\\
&=\sum_{\gs \in \set{1, 2}^{\ell(n)}\setminus I}\frac 1 {2^{\ell(n)}} \int\rho(T_\gs(x), U_n^{-1}(a(\gs)))\,dP\\
&\qquad +\sum_{\gs \in I}\frac 1{2^{\ell(n)+1}} \Big(\int\rho(T_{\gs1}(x), U_n^{-1}(a(\gs1)))\,dP+\int\rho(T_{\gs2}(x), U_n^{-1}(a(\gs2))\,dP\Big)\\
&=\sum_{\gs \in \set{1, 2}^{\ell(n)}\setminus I}\frac 1 {2^{\ell(n)}}\Big(\frac 1 {9^{\ell(n)}}V +\rho(a(\gs), U_n^{-1}(a(\gs)))\Big)\\
&\qquad +\sum_{\gs \in I}\frac 1{2^{\ell(n)+1}} \Big(\frac 2{9^{\ell(n)+1}} V+ \rho(a(\gs1), U_n^{-1}(a(\gs1)))+\rho(a(\gs2), U_n^{-1}(a(\gs2)))\Big)\\
 &=\frac 1 {18^{\ell(n)}}V\Big(2^{\ell(n)+1}-n+\frac 1 9(n-2^{\ell(n)})\Big)+A,
\end{align*}
where
\[
A =\sum_{\gs \in \set{1, 2}^{\ell(n)}\setminus I}\frac 1 {2^{\ell(n)}} \rho(a(\gs), U_n^{-1}(a(\gs))) +\sum_{\gs \in I}\frac 1{2^{\ell(n)+1}} \Big( \rho(a(\gs1), U_n^{-1}(a(\gs1)))+\rho(a(\gs2), U_n^{-1}(a(\gs2)))\Big).
\]
Thus, the proof of the proposition is complete. 
\end{proof}

The following corollary is a consequence of Proposition~\ref{prop0}. 
\begin{corollary} \label{cor001} 
Let $A$ be the expression given in Proposition~\ref{prop0}. Then, if $n$ is of the form $n=2^{\ell(n)}$ for some positive integer $\ell(n)\in \D N$, we have 
\begin{equation*} 
A=\frac{2^{\ell(n)}+1}{2\cdot 4^{\ell(n)}}+\frac{3\cdot 9^{\ell(n)}-1}{16\cdot 9^{\ell(n)}}.
\end{equation*}
\end{corollary}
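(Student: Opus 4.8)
The plan is to specialize the expression for $A$ from Proposition~\ref{prop0} to the case $n=2^{\ell(n)}$ and then evaluate the resulting sum using the moment identities of Lemma~\ref{carl}. Throughout I write $\ell:=\ell(n)$. First, since $n=2^{\ell}$ we have $n-2^{\ell}=0$, so the index set $I$ of Definition~\ref{defi1} satisfies $\te{card}(I)=0$, i.e. $I=\es$. Consequently the second sum in the definition of $A$ is empty and the first sum ranges over all of $\set{1,2}^{\ell}$, giving
\[A=\sum_{\gs\in\set{1,2}^{\ell}}\frac{1}{2^{\ell}}\,\rho\big(a(\gs),U_n^{-1}(a(\gs))\big).\]

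Next I would evaluate the summand in closed form. Using the formula \eqref{eq0001} for $U_n^{-1}$ together with the definition \eqref{Me00} of $\rho$, a direct computation shows that for any real $y$ the horizontal displacement $y-\frac12(y-\frac1n)$ and the second coordinate $\frac12(y-\frac1n)+\frac1n$ coincide, both equalling $\frac12(y+\frac1n)$, whence
\[\rho\big(y,U_n^{-1}(y)\big)=\frac{1}{2}\Big(y+\frac{1}{n}\Big)^2.\]
Applying this with $y=a(\gs)$ and $n=2^{\ell}$ reduces $A$ to $\frac{1}{2^{\ell+1}}\sum_{\gs}\big(a(\gs)+2^{-\ell}\big)^2$.

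The key step is to identify the centers $a(\gs)=T_\gs(\tfrac12)$ with the arithmetic set $\mathfrak{C}_{\ell}$ appearing in Lemma~\ref{carl}. Since each $T_\gs$ is an affine contraction of ratio $3^{-\ell}$, we have $a(\gs)=\frac{1}{2\cdot 3^{\ell}}+T_\gs(0)$, and writing $T_\gs(0)=\sum_{i=1}^{\ell}2(\gs_i-1)3^{-i}$ gives $2\cdot 3^{\ell}a(\gs)=1+4\sum_{i=1}^{\ell}(\gs_i-1)3^{\ell-i}$. Matching the bits $\gs_i-1\in\set{0,1}$ against the recursion $\mathfrak{C}_k=\mathfrak{C}_{k-1}\uu(\mathfrak{C}_{k-1}+3^{k-1}4)$ shows that the map $\gs\mapsto 2\cdot 3^{\ell}a(\gs)$ is a bijection of $\set{1,2}^{\ell}$ onto $\mathfrak{C}_{\ell}$. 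This correspondence, which lets the three moment sums be applied, is the main thing to verify carefully; everything else is bookkeeping.

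Finally, I would expand $\big(a(\gs)+2^{-\ell}\big)^2$ as a combination of $a(\gs)^2$, $a(\gs)$ and $1$, substitute $a(\gs)=c/(2\cdot 3^{\ell})$ with $c\in\mathfrak{C}_{\ell}$, and insert the three moment sums $\sum_c 1=2^{\ell}$, $\sum_c c=6^{\ell}$, and $\sum_c c^2=2^{\ell-1}(3\cdot 9^{\ell}-1)$ from Lemma~\ref{carl}. Collecting the term proportional to $9^{-\ell}$ into $\frac{3\cdot 9^{\ell}-1}{16\cdot 9^{\ell}}$ and the remaining two terms into $\frac{2^{\ell}+1}{2\cdot 4^{\ell}}$ then yields the asserted identity. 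The computation is routine once the identification with $\mathfrak{C}_{\ell}$ is in hand, so that identification is the only real obstacle.
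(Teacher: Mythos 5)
Your proposal is correct and follows essentially the same route as the paper: specialize $A$ to the case $I=\es$, compute $\rho\big(a(\gs),U_n^{-1}(a(\gs))\big)=\frac 12\big(a(\gs)+\frac 1n\big)^2$, and then evaluate the resulting sum via the moment identities of Lemma~\ref{carl}. The one place you go beyond the paper is in explicitly verifying that $\gs\mapsto 2\cdot 3^{\ell(n)}a(\gs)$ is a bijection of $\set{1,2}^{\ell(n)}$ onto $\F C_{\ell(n)}$ (the paper simply writes the sums $1+5+13+17+\cdots$ and takes this identification for granted), which is a worthwhile piece of added rigor but not a different method.
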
 

\begin{proof} Let $n\in \D N$ be such that $n$ is of the form $n=2^{\ell(n)}$ for some positive integer $\ell(n)\in \D N$. Notice that for $\gs \in \set{1, 2}^{\ell(n)}$, by \eqref{Me00} we have 
\[\rho(a(\gs), U_n^{-1}(a(\gs)))=\rho\Big(a(\gs), \Big(\frac 12(a(\gs)-\frac 1n), \frac 12(a(\gs)-\frac 1n)+\frac 1 n\Big)\Big)= \frac 12(a(\gs)+\frac 1{n})^2.\]
Thus, using Lemma~\ref{carl}, we have   
\begin{align*}
A&=\sum_{\gs \in \set{1, 2}^{\ell(n)}}\frac 1 {2^{\ell(n)}} \frac 12(a(\gs)+\frac 1{2^{\ell(n)}})^2 =\sum_{\gs \in \set{1, 2}^{\ell(n)}}\frac 1 {2^{\ell(n)}}\frac 12\Big((a(\gs))^2+2 a(\gs)\cdot\frac 1{2^{{\ell(n)}}}+\frac 1{4^{\ell(n)}}\Big) \\ 
&=\frac 1 {2^{{\ell(n)}+1}}\cdot \frac 1{(2\cdot 3^{\ell(n)})^2}\Big(1^2+5^2+13^2+17^2+37^2+41^2+49^2+53^2+\cdots \te{ up to } 2^{\ell(n)}\te{-terms}\Big)\\
&\qquad \qquad +\frac 1{4^{\ell(n)}}\cdot \frac 1{2\cdot 3^{\ell(n)}}\Big(1+5+13+17+37 +41 +49 +53 +\cdots \te{ up to } 2^{\ell(n)}\te{-terms}\Big)+\frac 12 \cdot \frac 1{4^{\ell(n)}}\\
&=\frac{1}{2\cdot 4^{\ell(n)}}+\frac{6^{\ell(n)}}{\left(2\cdot 3^{\ell(n)}\right) 4^{\ell(n)}}+\frac{2^{{\ell(n)}-1} \left(3\cdot 9^{\ell(n)}-1\right)}{2^{{\ell(n)}+1} \left(2\cdot 3^{\ell(n)}\right)^2}\\
&=\frac{2^{\ell(n)}+1}{2\cdot 4^{\ell(n)}}+\frac{3\cdot 9^{\ell(n)}-1}{16\cdot 9^{\ell(n)}}.
\end{align*}
Thus, the proof of the corollary is yielded. 
\end{proof}

In the next sections, we give the main results of the paper. 

 \begin{figure}
\vspace{-0.25 in}
\centerline{\includegraphics[width=6 in, height=4in]{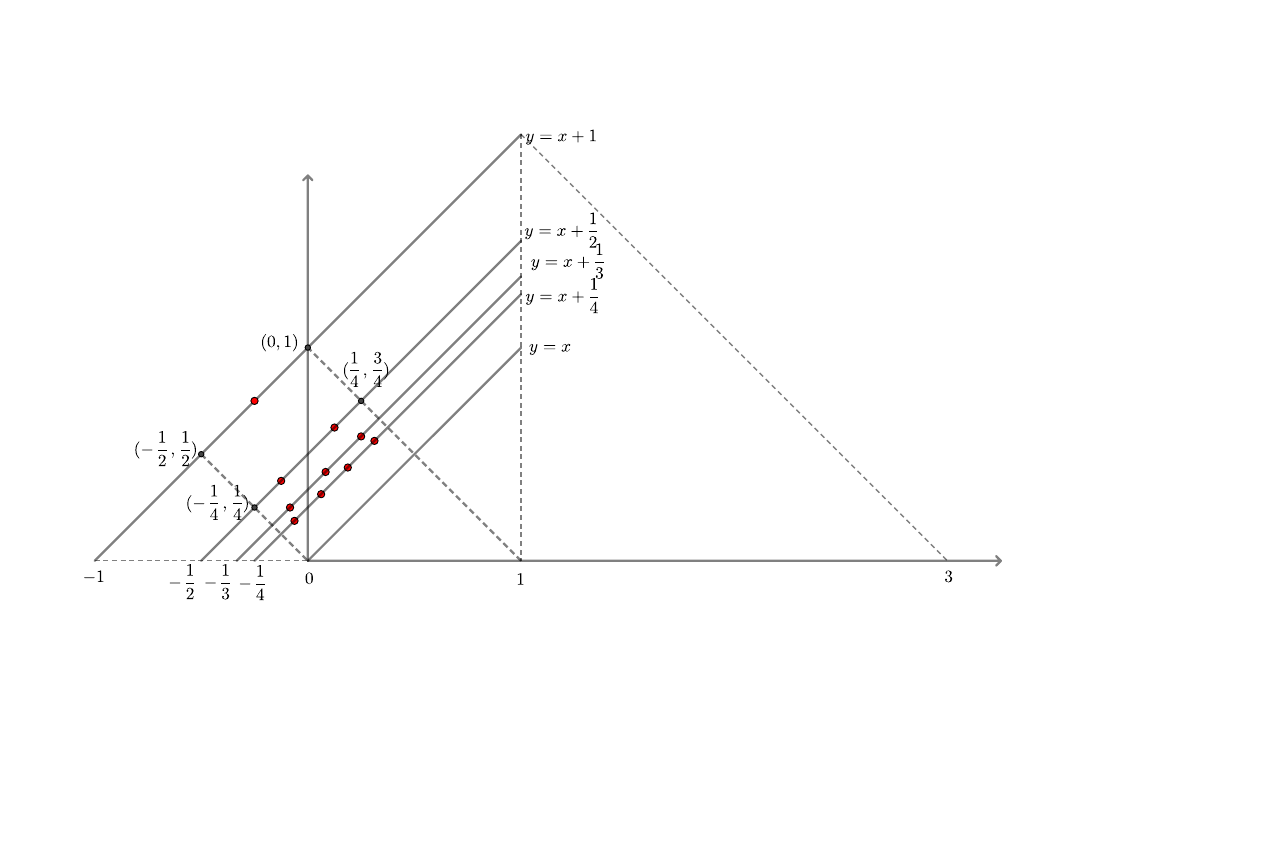}}
\vspace{-1.2 in}
\caption{Points in the optimal sets of $n$-points for $1\leq n\leq 4$.} \label{Fig}
\end{figure}

\section{Main Results} \label{sec1}

In this section, Theorem~\ref{Megha0}, Theorem~\ref{theo2}, and Theorem~\ref{theo3} contain all the main results of the paper.

\begin{prop}
A constrained optimal set of one-point is $\set{(-\frac{1}{4},\frac{3}{4})}$ with constrained quantization error $V_1=\frac{5}{4}$.
\end{prop}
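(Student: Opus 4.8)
The plan is to treat $n=1$ as a base case and read off the single optimal point directly from Lemma~\ref{lemma0}, then evaluate the resulting distortion by hand via Lemma~\ref{lemma2}. Since $\UU_{j=1}^{1}S_j=S_1$, any competing set of one point lies on $S_1$, so it suffices to locate the best point of $S_1$ and compute its distortion error.

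First I would apply Lemma~\ref{lemma0} with $n=1$. For a single point $(a_1,b_1)$ the Voronoi region $M((a_1,b_1)\mid \ga_1)$ is all of $\D R^2$, so $\pi(M((a_1,b_1)\mid \ga_1))$ carries the full mass of $P$ and hence $E(X : X\in \pi(M((a_1,b_1)\mid\ga_1)))=E(X)=\frac12$ by Lemma~\ref{lemma2}. Lemma~\ref{lemma0} then forces $(a_1,b_1)=U_1^{-1}(\frac12)$. Evaluating the formula for $U_1^{-1}$ in \eqref{eq0001} at $j=1$, $x=\frac12$ gives $U_1^{-1}(\frac12)=\big(\frac12(\frac12-1),\,\frac12(\frac12-1)+1\big)=(-\frac14,\frac34)$, the claimed point.

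Next I would compute the distortion error. Using \eqref{Me00},
\[
V_1=\int\rho\big(x,(-\tfrac14,\tfrac34)\big)\,dP=\int\Big((x+\tfrac14)^2+(\tfrac34)^2\Big)\,dP.
\]
The term $(\tfrac34)^2$ is constant and integrates to $\frac{9}{16}$, while by the moment identity in Lemma~\ref{lemma2} (applied with $x_0=-\frac14$) the first term equals $V(X)+(-\frac14-\frac12)^2=\frac18+\frac{9}{16}=\frac{11}{16}$. Summing gives $V_1=\frac{11}{16}+\frac{9}{16}=\frac54$. As a cross-check, since $1=2^0$ one may instead invoke Proposition~\ref{prop0} and Corollary~\ref{cor001} with $\ell(1)=0$ and $I=\es$: the leading term is $V\cdot 1=\frac18$ and $A=\frac{2^0+1}{2\cdot 4^0}+\frac{3\cdot 9^0-1}{16\cdot 9^0}=1+\frac18=\frac98$, again yielding $\frac18+\frac98=\frac54$.

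The only point requiring care is passing from Lemma~\ref{lemma0}, which supplies a stationarity (centroid) condition, to genuine global optimality of $(-\frac14,\frac34)$. Here this is immediate: parametrizing a point of $S_1$ by $a$ with $b=a+1$, the distortion $\int\big((x-a)^2+(a+1)^2\big)\,dP=V(X)+(a-\frac12)^2+(a+1)^2$ is a convex quadratic in the scalar $a$, so its unique critical point $a=-\frac14$ is the global minimizer, and existence of an optimal one-point set is automatic; moreover $-\frac14$ lies in the admissible range $[-\frac12,0]$ predicted by Remark~\ref{remM1}, so no boundary complication occurs. Hence there is essentially no obstacle beyond this routine convexity observation.
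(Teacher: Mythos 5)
Your proof is correct, and its decisive step---the final paragraph, where you parametrize $S_1$ as $(a,a+1)$ and minimize the convex quadratic $V(X)+(a-\frac12)^2+(a+1)^2=2a^2+a+\frac{11}{8}$ at $a=-\frac14$---is exactly the paper's proof, which consists of nothing else. The preliminary identification of the candidate via Lemma~\ref{lemma0} and the cross-check via Proposition~\ref{prop0} and Corollary~\ref{cor001} are correct but redundant; note only that those latter results are stated for $n\geq 2$, respectively for positive $\ell(n)$, so the cross-check technically extends them outside their stated scope (harmlessly, since the formulas do hold at $\ell(n)=0$).
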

\begin{proof}
Let $\ga:=\set{(a, b)}$ be a constrained optimal set of one-point. Since $\ga \ci S_1$, we have $b=a+1$. Now, the distortion error for $P$ with respect to the set $\ga$ is give by
\[V(P; \ga)=\int \rho(x, (a, a+1)) dP=2 a^2+a+\frac{11}{8},\]
the minimum value of which is $\frac{5}{4}$ and it occurs when $a=-\frac 14$. Thus, a constrained optimal set of one-point is $\set{(-\frac{1}{4},\frac{3}{4})}$ with constrained quantization error $V_1=\frac 54$, which is the proposition. 
\end{proof}
The following proposition is known. 

\begin{prop} (see \cite{GL2}) \label{Me420} 
For $n\geq 2$, let $\ga_n(I)$ be the set given by Definition~\ref{defi1}, and for each $j\in \mathbb{N}$, let $U_j$ be the bijective mapping as defined by \eqref{eq0001}.
Then, the set 
\[U_n(\ga_n(I))=\set{a(\gs) : \gs \in \set{1,2}^{\ell(n)} \setminus I} \uu \set{a(\gs 1) : \gs \in I} \uu \set {a(\gs 2) : \gs \in I}\]
forms an unconstrained optimal set of $n$-means for the Cantor distribution $P$ with the $n$th unconstrained quantization error 
\[V(P; U_n(\ga_n(I)))=\frac 1 {18^{\ell(n)}}V\Big(2^{\ell(n)+1}-n+\frac 1 9(n-2^{\ell(n)})\Big).\]
\end{prop}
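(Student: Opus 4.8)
The plan is to separate the argument into the \emph{value} of the distortion error for the particular set $U_n(\ga_n(I))$ and the \emph{optimality} of that value, the latter being the substantial part.

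For the value, first note that $U_n(\ga_n(I))$ has exactly $n$ elements: writing $\ell:=\ell(n)$, the first family contributes $2^{\ell}-\te{card}(I)=2^{\ell+1}-n$ points, while the second and third families contribute $2\,\te{card}(I)=2(n-2^{\ell})$ points, for a total of $n$. Recalling that $a(\gs)=T_\gs(\tfrac12)$ is the centroid of $P$ restricted to $J_\gs$, Lemma~\ref{lemma1} and Lemma~\ref{lemma2} give the distortion of $a(\gs)$ over its own interval as $\int_{J_\gs}(x-a(\gs))^2\,dP=\frac 1{2^{|\gs|}}\cdot\frac 1{9^{|\gs|}}V=\frac 1{18^{|\gs|}}V$, using $T_\gs(x)-T_\gs(\tfrac12)=3^{-|\gs|}(x-\tfrac12)$. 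Summing $\frac 1{18^{\ell}}V$ over the $2^{\ell+1}-n$ unsplit level-$\ell$ words and $\frac 1{18^{\ell+1}}V$ over the $2(n-2^{\ell})$ level-$(\ell+1)$ children yields
\[
V(P; U_n(\ga_n(I))) = \frac 1{18^{\ell}} V\Big((2^{\ell+1}-n) + \tfrac 19(n - 2^{\ell})\Big),
\]
which is the claimed formula; this is the same bookkeeping as in Proposition~\ref{prop0}, but with the projection term $A$ absent, since in the unconstrained problem each centroid $a(\gs)$ is used directly rather than through $U_n^{-1}$.

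The real content is that this value is minimal over all $n$-point sets, and here I would use the self-similar recursion. By the general theory, an unconstrained optimal set of $n$-means for the compactly supported $P$ exists and satisfies the centroid condition, so every point is the conditional expectation of $P$ on its Voronoi region and lies in $[0,1]$. The key structural step is a \emph{separation lemma}: for $n\geq 2$ an optimal set $\ga$ has no point in the open central gap $(\tfrac13,\tfrac23)$, both $\ga\ii J_1$ and $\ga\ii J_2$ are nonempty, and the Voronoi partition respects $J=J_1\uu J_2$ (each point of $J_1$ is served by $\ga\ii J_1$ and each point of $J_2$ by $\ga\ii J_2$). Granting this, the self-similarity of $P$ under $T_1,T_2$ (each a similarity of ratio $\tfrac13$ carrying mass $\tfrac12$) shows that $T_i^{-1}(\ga\ii J_i)$ is an optimal set of $n_i:=\te{card}(\ga\ii J_i)$ means for $P$, so the $n$th unconstrained quantization error $e_n$ satisfies
\[
e_n = \frac 1{18}\min\set{e_{n_1} + e_{n_2} : n_1 + n_2 = n,\ n_1, n_2 \geq 1}, \qquad e_1 = V = \tfrac 18 .
\]
I would then solve this recursion by induction on $n$: for $n=2^k$ the balanced split gives $e_{2^k}=\frac 1{9^k}V$, matching the formula, and for $2^{\ell}\leq n<2^{\ell+1}$ one checks that the minimizing splits are precisely those refining $n-2^{\ell}$ of the $2^{\ell}$ level-$\ell$ intervals into their two children, i.e.\ exactly the configurations $U_n(\ga_n(I))$ indexed by split sets $I$ with $\te{card}(I)=n-2^{\ell}$. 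The greedy nature of the refinement — splitting a level-$k$ interval lowers the error by $\tfrac89\cdot\frac 1{18^k}V$, which decreases in $k$ — is what forces all splits to occur at the coarsest available level $\ell$.

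The main obstacle is the separation lemma. Because the central gap carries no mass, the centroid condition alone does not expel a point placed in $(\tfrac13,\tfrac23)$; ruling out such straddling points and establishing the clean $J_1/J_2$ decomposition requires comparison estimates that exploit the specific geometry — the gap width $\tfrac13$ against the diameters $3^{-k}$ of the basic intervals — to show that any configuration violating the separation can be strictly improved. Once the recursion is in force, the remaining induction to recover the closed form and identify the minimizers is routine bookkeeping.
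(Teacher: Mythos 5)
You should first note that the paper does not prove Proposition~\ref{Me420} at all: it is imported by citation from Graf--Luschgy \cite{GL2}, so there is no in-paper argument to compare against. Your outline is, in essence, exactly the argument of \cite{GL2}: compute the distortion of the candidate sets, then prove optimality via a separation property and the self-similar recursion. Your first half (the value) is correct and complete --- the count $2^{\ell+1}-n$ unsplit words plus $2(n-2^{\ell})$ children, each word $\sigma$ contributing $\frac{1}{18^{|\sigma|}}V$, is the same bookkeeping as Proposition~\ref{prop0} with the term $A$ absent --- and your recursion $e_n=\frac{1}{18}\min\{e_{n_1}+e_{n_2}: n_1+n_2=n,\ n_1,n_2\geq 1\}$ is the correct consequence of self-similarity \emph{once} the separation property is known.

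However, as a proof the proposal has a genuine gap, and you name it yourself: the separation lemma. The entire optimality claim hinges on showing that for $n\geq 2$ an optimal set has no point in the open gap $(\frac13,\frac23)$, meets both $J_1$ and $J_2$, and induces a Voronoi partition compatible with $J=J_1\cup J_2$; without this, the recursion --- and hence both the identification of the minimizers $U_n(\alpha_n(I))$ and the closed-form error --- never gets off the ground. Saying that this step ``requires comparison estimates that exploit the specific geometry'' is a statement of the difficulty, not an argument; in \cite{GL2} precisely this step, interleaved inductively with the error formula, constitutes the bulk of the technical work (the centroid condition alone cannot expel a point from the massless gap, as you correctly observe). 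The concluding claim that the minimizing splits all occur at the coarsest level $\ell(n)$ is likewise only asserted; it is indeed an elementary exchange argument given the recursion, using that the saving $\frac89\cdot\frac{1}{18^k}V$ from splitting a level-$k$ interval decreases in $k$, but as written neither pillar of the optimality half is actually established. So: right strategy, correct and complete value computation, but the optimality half remains a plan rather than a proof.
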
 
 
 \begin{prop} \label{Me421} 
The bijective mappings $U_n$ preserves the Voronoi regions with respect to the probability measure $P$, i.e., for any discrete $\gb\sci \D R$, and $a\in \gb$, we have 
\begin{equation*} \label{Me45} P(M(a|\gb))=P(M(U_n^{-1}(a)|U_n^{-1}(\gb))).
\end{equation*} 
 \end{prop}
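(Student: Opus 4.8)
The plan is to reduce the claimed equality of $P$-measures to a pointwise nearest-point statement and to show that the nearest-point assignment is transported faithfully by $U_n^{-1}$. Since $P$ is a measure on $\D R$ while $U_n^{-1}(\gb)$ sits on $S_n \ci \D R^2$, the relevant squared distance between $x\in\D R$ and a point of $U_n^{-1}(\gb)$ is the function $\rho$ of \eqref{Me00}. Accordingly, read against $P$, the region $M(U_n^{-1}(a)\mid U_n^{-1}(\gb))$ is the set of $x\in\D R$ with $\rho(x, U_n^{-1}(a)) \leq \rho(x, U_n^{-1}(a'))$ for every $a'\in\gb$, while $M(a\mid\gb)$ is the set of $x\in\D R$ with $(x-a)^2 \leq (x-a')^2$ for every $a'\in\gb$. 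The proposition will follow once these two sets are shown to coincide.

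The heart of the argument is a single computation. Fixing $n$ and writing $U_n^{-1}(a)=\big(\frac 12(a-\frac 1n),\, \frac 12(a-\frac 1n)+\frac 1n\big)$ as in \eqref{eq0001}, I would expand $\rho(x, U_n^{-1}(a))$ via \eqref{Me00} and collect terms. The outcome is that for any $a, a'\in\gb$ and any $x\in\D R$,
\[
\rho(x, U_n^{-1}(a)) - \rho(x, U_n^{-1}(a')) = \frac 12\Big((x-a)^2 - (x-a')^2\Big).
\]
In carrying out the expansion the point is that the terms $x^2$, $\frac xn$, and $\frac 1{2n^2}$ are common to both $\rho(x,U_n^{-1}(a))$ and $\rho(x,U_n^{-1}(a'))$ and cancel in the difference, leaving precisely one half of the corresponding difference of squared distances on the real line. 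Thus $U_n^{-1}$ rescales the $\rho$-separation between competing sites by the single positive constant $\frac 12$, uniformly in $x$, $a$, and $a'$.

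From this identity the conclusion is immediate. Because $\frac 12>0$, the inequality $\rho(x, U_n^{-1}(a)) \leq \rho(x, U_n^{-1}(a'))$ holds if and only if $(x-a)^2 \leq (x-a')^2$. Hence, for each $x\in\D R$, the point $U_n^{-1}(a)$ minimizes $\rho(x,\cdot)$ over $U_n^{-1}(\gb)$ exactly when $a$ minimizes $|x-\cdot|$ over $\gb$. Therefore the two Voronoi regions agree as subsets of $\D R$, and taking $P$-measures yields $P(M(a\mid\gb)) = P(M(U_n^{-1}(a)\mid U_n^{-1}(\gb)))$.

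The only delicate point is the treatment of the shared boundaries, i.e. the $x$ at which two of the distances are equal. Since the displayed identity relates the two difference-of-distances by the positive factor $\frac 12$, equality on one side occurs for exactly the same $x$ as on the other, so ties are broken consistently and the regions coincide as sets rather than merely up to their boundaries. Even disregarding this, the tie set is finite and $P$ is non-atomic on the Cantor set $C$, so it carries no $P$-mass and the measures agree in any case. This is the one step worth stating carefully, but it poses no real obstacle.
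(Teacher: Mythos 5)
Your proof is correct, but it follows a genuinely different route from the paper's. The paper argues geometrically: writing $M(a|\gb)=[c,d]$, it uses that $U_n$ preserves order and that the perpendicular to $S_n$ at a point $p\in S_n$ meets the real line exactly at $U_n(p)$, so the boundaries of the planar Voronoi cell of $U_n^{-1}(a)$ (perpendicular bisectors of segments joining consecutive points of $U_n^{-1}(\gb)\ci S_n$, hence perpendiculars to $S_n$ at the $U_n^{-1}$-images of the midpoints $\frac 12(a+a')$) cut the real line precisely at $c$ and $d$; this yields $M(a|\gb)\ci M(U_n^{-1}(a)|U_n^{-1}(\gb))$ together with $P\big(M(U_n^{-1}(a)|U_n^{-1}(\gb))\sm M(a|\gb)\big)=0$. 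You instead prove the pointwise algebraic identity $\rho(x,U_n^{-1}(a))-\rho(x,U_n^{-1}(a'))=\frac 12\big((x-a)^2-(x-a')^2\big)$, which does check out: indeed $\rho(x,U_n^{-1}(a))=\frac 12(x-a)^2+\frac 12\big(x+\frac 1n\big)^2$, the same completion of squares that drives the paper's proof of Lemma~\ref{lemma0}. From it you conclude that the two Voronoi cells, intersected with the real line, are \emph{equal as sets} (ties included), so the measure identity follows with no null-set or non-atomicity argument needed at all; your fallback remark is also fine, except that for a general discrete $\gb$ the tie set is countable rather than finite. What each approach buys: yours is fully self-contained and rigorous exactly where the paper is informal (its step ``hence, we can say that the boundary \dots intersects $S_n$ at $U_n^{-1}(c)$ and $U_n^{-1}(d)$'' is asserted without computation), and it proves slightly more than the stated equality of measures; the paper's version, in exchange, exhibits the geometric mechanism --- projection along perpendiculars to $S_n$ --- that motivates the definition of $U_n$, Remark~\ref{remM1}, and the reduction of the constrained problem to the unconstrained one in Theorem~\ref{Megha0}.
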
 
 \begin{proof}
Since $\gb\sci \D R$, for any $a\in \gb$, we can write $M(a|\gb)=[c, d]$ for some $c, d\in \D R$ with $c<d$.  
 Notice that the bijective mapping $U_n$ preserves the order, i.e., for any $(e, e+\frac 1 n), \, (f, f+\frac 1 n) \in S_n$ if $e<f$, then $U_n(e, e+\frac 1 n)<U_n(f, f+\frac 1 n)$. 
 Moreover, for any $(e, e+\frac 1n)\in S_n$, $U_n(e, e+\frac 1n)$ represents the point on $J$ where the perpendicular on $S_n$ at $(e, e+\frac 1n)$ intersects $J$. Hence, we can say that the boundary of the Voronoi region $M(U_n^{-1}(a)|U_n^{-1}(\gb))$ intersects $S_n$ at the points given by $U_n^{-1}(c)$ and $U_n^{-1}(d)$, i.e., $M(U_n^{-1}(a)|U_n^{-1}(\gb))$ contains the closed interval $[c, d]$ as a subset, i.e.,  
  \[M(a|\gb)\sci M(U_n^{-1}(a)|U_n^{-1}(\gb)).\]
 Since $P(M(U_n^{-1}(a)|U_n^{-1}(\gb)) \setminus M(a|\gb))=0$, we have $P(M(a|\gb))=P(M(U_n^{-1}(a)|U_n^{-1}(\gb)))$. Thus, the proof of the proposition is complete. 
 \end{proof} 
The following theorem gives the optimal sets of $n$-points for all positive integers $n\geq 2$ for the Cantor distribution $P$ with respect to the family of constraints $\set{S_j : j\in \D N}$. 

\begin{theorem} \label{Megha0}
For $n\geq 2$, let $\ga_n(I)$ be the set given by Definition~\ref{defi1}. Then, $\ga_n(I)$ forms a constrained optimal set of $n$-points for $P$ with $n$th constrained quantization error 
\[V_n=V(P; \ga_n(I))=V(P; U_n(\ga_n(I)))+A.\]
\end{theorem}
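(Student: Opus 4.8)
The statement contains two assertions: the explicit value of $V_n$ and the optimality of $\ga_n(I)$. The error formula is the immediate part. Proposition~\ref{prop0} evaluates $V(P;\ga_n(I))$ as $\frac 1{18^{\ell(n)}}V\big(2^{\ell(n)+1}-n+\frac 19(n-2^{\ell(n)})\big)+A$, and Proposition~\ref{Me420} identifies the first summand as exactly the unconstrained $n$th quantization error $V(P;U_n(\ga_n(I)))$. Subtracting gives $V(P;\ga_n(I))=V(P;U_n(\ga_n(I)))+A$, so once optimality is established the displayed identity follows at once.

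The substance of the theorem is the optimality of $\ga_n(I)$, and the plan is to transfer the problem to the unconstrained setting through the bijection $U_n$. First I would invoke Lemma~\ref{lemma0} and Remark~\ref{remM1} to reduce the search: every constrained optimal set of $n$-points lies on the single constraint $S_n$, with $x$-coordinates in the range $U_n^{-1}([0,1])$. Consequently $V_n=\inf\set{V(P;\gb) : \gb\ci S_n,\ \te{card}(\gb)\le n}$, and it suffices to show this infimum is attained at $\ga_n(I)$.

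The key step is an exact distortion identity on $S_n$. For any $x\in\D R$ and any point $(a,a+\frac 1n)\in S_n$ one has $\rho(x,(a,a+\frac 1n))=\frac 12\big[(U_n(a,a+\frac 1n)-x)^2+(x+\frac 1n)^2\big]$, which is precisely the two-term splitting already exploited in the proof of Lemma~\ref{lemma0}, now with the basic-interval centre replaced by an arbitrary $x$. Since the summand $(x+\frac 1n)^2$ is independent of the chosen point of $S_n$, for fixed $x$ the element of $\gb$ minimizing $\rho(x,\cdot)$ is the one whose $U_n$-image is closest to $x$; this is the Voronoi-preservation property recorded in Proposition~\ref{Me421}. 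Integrating against $P$ then yields, for every $\gb\ci S_n$, the relation $V(P;\gb)=\frac 12\int(x+\frac 1n)^2\,dP+\frac 12 V(P;U_n(\gb))$, whose first summand is a constant independent of $\gb$ (computable from Lemma~\ref{lemma2}).

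With this identity the conclusion is routine: minimizing $V(P;\gb)$ over $\gb\ci S_n$ with at most $n$ points is equivalent to minimizing the unconstrained distortion $V(P;U_n(\gb))$ over $n$-point subsets of $\D R$, because the two differ by a fixed constant and $U_n$ maps the admissible subsets of $S_n$ bijectively onto the finite subsets of an interval containing the support $C$. By Proposition~\ref{Me420} this unconstrained minimum is attained exactly at $U_n(\ga_n(I))$, so the constrained minimum is attained at $\ga_n(I)$, proving optimality. The step I expect to be the main obstacle is the first reduction, namely verifying through Lemma~\ref{lemma0} that an optimal set cannot spread across the constraints $S_1,\dots,S_{n-1}$ but must sit entirely on $S_n$; the distortion identity and the appeal to the known unconstrained result are then straightforward.
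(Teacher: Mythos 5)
Your proof is correct, and at the top level it follows the same route as the paper: reduce to the single constraint $S_n$ via Lemma~\ref{lemma0} and Remark~\ref{remM1}, transfer the problem to the unconstrained setting through $U_n$, quote Proposition~\ref{Me420}, and read off the error formula from Proposition~\ref{prop0}. The difference is in how the transfer is justified, and here your argument has genuinely more substance than the paper's. The paper's proof asserts that ``$\ga_n(I)$ is constrained optimal if and only if $U_n(\ga_n(I))$ is unconstrained optimal'' and declares this ``clearly true by Proposition~\ref{Me420} and Proposition~\ref{Me421}''; no quantitative relation between the constrained and unconstrained distortions is ever written down. You instead establish the exact pointwise identity $\rho(x,(a,a+\tfrac 1n))=\tfrac 12\big[(U_n(a,a+\tfrac 1n)-x)^2+(x+\tfrac 1n)^2\big]$ (which is correct --- it is the completing-the-square step buried inside the proof of Lemma~\ref{lemma0}, with the interval midpoint replaced by a general $x$), integrate it to get $V(P;\gb)=\tfrac 12\int(x+\tfrac 1n)^2\,dP+\tfrac 12 V(P;U_n(\gb))$ for \emph{every} $\gb\ci S_n$, and conclude that the two minimization problems have identical minimizers. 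This proves the equivalence the paper only asserts, and it subsumes the Voronoi-preservation statement of Proposition~\ref{Me421} as a byproduct. Two minor points: your phrase ``the two differ by a fixed constant'' is slightly off --- they differ by a constant \emph{and} a factor $\tfrac 12$ --- but an increasing affine relation is all the argument needs; and your identity is consistent with the paper's stated formula, since it forces $A=\tfrac 12\int(x+\tfrac 1n)^2\,dP-\tfrac 12 V(P;U_n(\ga_n(I)))$, which one can check agrees with Corollary~\ref{cor001} when $n=2^{\ell(n)}$.
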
 

\begin{proof}
To prove that $\ga_n(I)$ forms a constrained optimal set of $n$-points for $P$, it is enough to prove the fact that $\ga_n(I)$ forms a constrained optimal set of $n$-points for $P$ if and only if $U_n(\ga_n(I))$ forms an unconstrained optimal set of $n$-means for $P$. 
The fact is clearly true by Proposition~\ref{Me420} and Proposition~\ref{Me421}. Hence, $\ga_n(I)$ forms a constrained optimal set of $n$-points for $P$ (see Figure~\ref{Fig}).  Then, by Proposition \ref{prop0} and Proposition~\ref{Me420}, we have the $n$th constrained quantization error as 
\[V_n=V(P; \ga_n(I))=V(P; U_n(\ga_n(I)))+A.\]
Thus, the proof of the theorem is complete. 
\end{proof} 
 
We need the following proposition, which is a special case of Theorem~\ref{Megha0}, to prove Theorem~\ref{theo2} and Theorem~\ref{theo3}. 

\begin{prop} \label{Mega422} 
Let $n\in \D N$ be such that $n=2^{\ell(n)}$ for some positive integer $\ell(n)$. Then,
the set \[\ga_n(I)=\set{U_n^{-1}(a(\gs)) : \gs \in \set{1,2}^{\ell(n)}}\] forms a constrained optimal set of $2^{\ell(n)}$-points with constrained quantization error
\[V_{2^{\ell(n)}}(P)=\frac{1}{16} \left(2^{3-2 \ell(n)}+2^{3-\ell(n)}+9^{-\ell(n)}+3\right).\]
\end{prop}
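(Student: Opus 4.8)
The plan is to recognize the statement as the specialization of Theorem~\ref{Megha0} to the case $n=2^{\ell(n)}$, where the combinatorial subset $I$ from Definition~\ref{defi1} must satisfy $\te{card}(I)=n-2^{\ell(n)}=0$ and hence be empty. With $I=\es$, the set $\ga_n(I)$ in Definition~\ref{defi1} collapses to exactly $\set{U_n^{-1}(a(\gs)) : \gs \in \set{1,2}^{\ell(n)}}$, and Theorem~\ref{Megha0} immediately certifies that this is a constrained optimal set of $2^{\ell(n)}$-points. It then remains only to evaluate the error formula $V_n=V(P; U_n(\ga_n(I)))+A$ supplied by that theorem in closed form.

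For the first summand, I would substitute $n=2^{\ell(n)}$ into the unconstrained quantization error of Proposition~\ref{Me420}. The bracketed factor $2^{\ell(n)+1}-n+\frac 19(n-2^{\ell(n)})$ simplifies to $2^{\ell(n)}$, and using $V=\frac 18$ from Lemma~\ref{lemma2} together with $18^{\ell(n)}=2^{\ell(n)}9^{\ell(n)}$, this summand reduces to $\frac 18\cdot 9^{-\ell(n)}$. For the second summand I would quote $A$ directly from Corollary~\ref{cor001}, which already evaluates it as $A=\frac{2^{\ell(n)}+1}{2\cdot 4^{\ell(n)}}+\frac{3\cdot 9^{\ell(n)}-1}{16\cdot 9^{\ell(n)}}$ precisely for $n$ of this form.

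The final step is to add the two contributions and simplify. Splitting $A$ as $\frac 12 2^{-\ell(n)}+\frac 12 4^{-\ell(n)}+\frac 3{16}-\frac 1{16}9^{-\ell(n)}$ and combining with $\frac 18 9^{-\ell(n)}$, the only point demanding care is the collection of the two $9^{-\ell(n)}$ terms, where $\frac 18-\frac 1{16}=\frac 1{16}$ is the surviving coefficient. Rewriting $\frac 12 4^{-\ell(n)}=\frac 1{16}2^{3-2\ell(n)}$, $\frac 12 2^{-\ell(n)}=\frac 1{16}2^{3-\ell(n)}$, and $\frac 3{16}=\frac 1{16}\cdot 3$ then produces the asserted expression $\frac 1{16}(2^{3-2\ell(n)}+2^{3-\ell(n)}+9^{-\ell(n)}+3)$. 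I do not expect any genuine obstacle beyond this bookkeeping: the substance is already carried by Theorem~\ref{Megha0}, Proposition~\ref{Me420}, and Corollary~\ref{cor001}, so the present proposition is essentially their arithmetic consequence.
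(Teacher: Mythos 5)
Your proposal is correct and follows essentially the same route as the paper: optimality comes from Theorem~\ref{Megha0} (with the observation, implicit in the paper, that $I=\es$ when $n=2^{\ell(n)}$), the unconstrained term $\frac{V}{9^{\ell(n)}}$ comes from Proposition~\ref{Me420}, the term $A$ is quoted from Corollary~\ref{cor001}, and the rest is the same arithmetic simplification. Your bookkeeping (including the $\frac 18-\frac 1{16}=\frac 1{16}$ collection of the $9^{-\ell(n)}$ terms) checks out, so there is nothing to correct.
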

\begin{proof}
Let $n=2^{\ell(n)}$ for some positive integer $\ell(n)$. By Theorem~\ref{Megha0}, it follows that the set $\set{U_n^{-1}(a(\gs)) : \gs \in \set{1,2}^{\ell(n)}}$ forms a constrained optimal set of $n$-points. 
By Proposition~\ref{Me420} and Theorem~\ref{Megha0}, and Corollary~\ref{cor001}, it follows that the $n$th constrained quantization error is 
\[V_{2^{\ell(n)}}(P)=\frac{V}{9^{\ell(n)}}+\frac{2^{\ell(n)}+1}{2\cdot 4^{\ell(n)}}+\frac{3\cdot 9^{\ell(n)}-1}{16\cdot 9^{\ell(n)}},\]
which yields \[V_{2^{\ell(n)}}(P)=\frac{1}{16} \left(2^{3-2 \ell(n)}+2^{3-\ell(n)}+9^{-\ell(n)}+3\right).\]
Thus, the proof of the proposition is complete. 
\end{proof}

\begin{theorem}\label{theo2} 
The constrained quantization dimension $D(P)$ of the probability measure $P$ exists, and $D(P)=2$. 
\end{theorem}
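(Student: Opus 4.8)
The plan is to identify the limiting value $V_{\infty}(P)$ first, and then to obtain two-sided bounds on $V_n(P)-V_{\infty}(P)$ that are sharp enough to evaluate the defining limit by a squeeze argument.

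I would begin by noting that the sequence $(V_n(P))_{n\ge 1}$ is non-increasing. Indeed, any set $\ga$ feasible for $V_n$, i.e. $\ga\ci\UU_{j=1}^n S_j$ with $1\le\te{card}(\ga)\le n$, is also feasible for $V_{n+1}$, since $\UU_{j=1}^n S_j\ci\UU_{j=1}^{n+1}S_j$ and the cardinality bound only loosens; hence $V_{n+1}(P)\le V_n(P)$. As $V_n(P)\ge 0$, the limit $V_{\infty}(P)$ exists. To evaluate it, I would use Proposition~\ref{Mega422}: from
\[V_{2^{\ell(n)}}(P)=\tfrac{1}{16}\big(2^{3-2\ell(n)}+2^{3-\ell(n)}+9^{-\ell(n)}+3\big),\]
letting $\ell(n)\to\infty$ gives $V_{\infty}(P)=\frac{3}{16}$; because the full sequence converges, its limit agrees with the limit along the subsequence $n=2^{\ell(n)}$.

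Next, writing $W_n:=V_n(P)-\frac{3}{16}$, the same formula gives the exact value
\[W_{2^{\ell(n)}}=\frac{1}{2^{\ell(n)+1}}+\frac{1}{2\cdot 4^{\ell(n)}}+\frac{1}{16\cdot 9^{\ell(n)}},\]
whose leading term as $\ell(n)\to\infty$ is $2^{-(\ell(n)+1)}$, so that $-\log W_{2^{\ell(n)}}=(\ell(n)+1)\log 2+o(1)$. For a general $n$ with $2^{\ell(n)}\le n<2^{\ell(n)+1}$, monotonicity yields $W_{2^{\ell(n)+1}}\le W_n\le W_{2^{\ell(n)}}$; since both endpoints are of exact order $2^{-\ell(n)}$, taking logarithms gives
\[(\ell(n)+1)\log 2+o(1)\le -\log W_n\le (\ell(n)+2)\log 2+o(1).\]
Combining this with $\ell(n)\log 2\le \log n<(\ell(n)+1)\log 2$, I would sandwich the defining quotient:
\[\frac{2\ell(n)}{\ell(n)+2+o(1)}\le \frac{2\log n}{-\log W_n}\le \frac{2(\ell(n)+1)}{\ell(n)+1+o(1)}.\]
Both bounds converge to $2$ as $n\to\infty$, so the squeeze theorem gives $D(P)=2$.

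The main obstacle is passing from the exact error formula, available only at the powers $n=2^{\ell(n)}$, to arbitrary $n$. Monotonicity of $V_n$, together with the fact that $W_{2^{\ell(n)}}$ and $W_{2^{\ell(n)+1}}$ differ only by a bounded factor, is exactly what lets the squeeze close; without it one would have to control the term $A$ of Proposition~\ref{prop0} for every $n$, which is considerably more delicate.
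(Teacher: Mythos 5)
Your proof is correct and takes essentially the same route as the paper: sandwiching $V_n$ between $V_{2^{\ell(n)+1}}$ and $V_{2^{\ell(n)}}$, using the exact formula of Proposition~\ref{Mega422} to identify $V_\infty=\frac{3}{16}$, and then squeezing the quotient $\frac{2\log n}{-\log(V_n-V_\infty)}$ between two bounds that both tend to $2$. The only (welcome) difference is that you explicitly justify the monotonicity $V_{n+1}\leq V_n$ from the nested feasible sets, a step the paper's proof uses but leaves implicit.
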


\begin{proof}
For $n\in \D N$ with $n\geq 2$, let $\ell(n)$ be the unique natural number such that $2^{\ell(n)}\leq n<2^{\ell(n)+1}$. Then,
$V_{2^{\ell(n)+1}}\leq V_n\leq V_{2^{\ell(n)}}$. By Proposition~\ref{Mega422}, we see that $V_{2^{\ell(n)+1}}\to \frac{3}{16}$ and  $V_{2^{\ell(n)}}\to \frac{3}{16} $ as $n\to \infty$, and so $V_n\to \frac{3}{16}$ as $n\to \infty$,
i.e., $V_\infty=\frac{3}{16}$.
We can take $n$ large enough so that $(V_{2^{\ell(n)}}-V_\infty)<1$. Then,
\[0<-\log (V_{2^{\ell(n)}}-V_{\infty})\leq -\log (V_n-V_\infty)\leq -\log (V_{2^{\ell(n)+1}}-V_\infty)\]
yielding
\[\frac{2\ell(n) \log 2}{-\log (V_{2^{\ell(n)+1}}-V_\infty)}\leq \frac{2\log n}{-\log(V_n-V_\infty)}\leq \frac{2(\ell(n)+1) \log 2}{-\log (V_{2^{\ell(n)}}-V_\infty)}.\]
Notice that
\begin{align*}
&\lim_{n\to \infty} \frac{2\ell(n) \log 2}{-\log (V_{2^{\ell(n)+1}}-V_\infty)}=\lim_{n\to \infty} \frac{2\ell(n) \log 2}{-\log(\frac{1}{16} \left(2^{2-\ell(n)}+2^{3-2 (\ell(n)+1)}+9^{-\ell(n)-1}+3\right)-\frac{3}{16})} 
\end{align*}
implying \[\lim_{n\to \infty} \frac{2\ell(n) \log 2}{-\log (V_{2^{\ell(n)+1}}-V_\infty)}=2. 
\te{ Similarly, } 
 \lim_{n\to \infty} \frac{2(\ell(n)+1) \log 2}{-\log (V_{2^{\ell(n)}}-V_\infty)}=2.\]
Hence, $\lim_{n\to \infty}  \frac{2\log n}{-\log(V_n-V_\infty)}=2$, i.e., the constrained quantization dimension $D(P)$ of the probability measure $P$ exists and $D(P)=2$.
Thus, the proof of
the theorem is complete.
\end{proof}

\begin{theorem} \label{theo3} 
The $D(P)$-dimensional constrained quantization coefficient for $P$ is infinity.
\end{theorem}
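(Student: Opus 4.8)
The plan is to evaluate the defining limit \eqref{eq00100} for $\gk=D(P)$ head-on. Since Theorem~\ref{theo2} gives $D(P)=2$, the exponent is $n^{2/D(P)}=n$, and with $V_\infty=\frac{3}{16}$ the object to understand is $\lim_{n\to\infty} n\,(V_n-V_\infty)$. The first task is therefore to produce a closed form for $V_n-V_\infty$ valid for \emph{every} $n$, not merely along the dyadic subsequence $n=2^{\ell(n)}$ treated in Proposition~\ref{Mega422}.

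To get such a formula I would start from Theorem~\ref{Megha0}, writing $n=2^{\ell}+m$ with $\ell=\ell(n)$ and $0\le m<2^{\ell}$, so that $V_n=V(P;U_n(\ga_n(I)))+A$. Proposition~\ref{Me420} evaluates the first summand, and for $A$ I would record the elementary fact that replacing the single contribution of a level-$\ell$ interval by those of its two children changes $A$ by the \emph{same} amount $9^{-(\ell+1)}/2^{\ell+1}$ regardless of which interval is split; hence $A$ depends on $I$ only through $m=\text{card}(I)$. Combining this with Lemma~\ref{carl} (which yields $\sum_{\gs\in\{1,2\}^{\ell}}a(\gs)=2^{\ell-1}$ and the matching second moment), I expect to arrive at
\[
V_n-V_\infty=\frac{1}{2n}+\frac{1}{2n^{2}}+\frac{1}{16\cdot 9^{\ell}}-\frac{m}{18\cdot 18^{\ell}}.
\]
One then multiplies by $n$ and bounds the three lower-order terms uniformly in $m$, using $n\le 2^{\ell+1}$ together with $n/9^{\ell}\to0$ and $nm/18^{\ell}\to0$.

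The step I expect to be decisive—and the genuine obstacle to the statement as written—is the identification of the leading term. The offset $1/n$ of the constraint $S_n$ forces a contribution $\frac{1}{2n}$ into $A$ through the cross term $2\cdot\frac1n\sum a(\gs)$, and this term is exactly of order $n^{-1}$. Consequently $n\,(V_n-V_\infty)\to\frac12$, so at the critical exponent $\gk=D(P)=2$ the limit in \eqref{eq00100} is \emph{finite and equals} $\frac12$ rather than $+\infty$; indeed $n^{2/\gk}(V_n-V_\infty)\sim\frac12\,n^{\,2/\gk-1}$ is $+\infty$ precisely for $\gk<2$ and $0$ for $\gk>2$. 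Thus the only route consistent with the displayed $+\infty$ is to exploit that $V_n\to V_\infty=\frac{3}{16}>0$: the un-normalized product $n^{2/D(P)}V_n=n\,V_n\to+\infty$ trivially, so the conclusion holds under the reading in which $V_\infty$ is not subtracted (as happens automatically in unconstrained quantization, where $V_\infty=0$). The main obstacle is precisely this reconciliation—with the subtraction of $V_\infty$ present in \eqref{eq00100} the computation above shows the $D(P)$-dimensional coefficient exists and equals $\frac12$, so I would flag that either the subtraction must be dropped or the exponent lowered below the critical value $2$ for the stated conclusion to be literally true.
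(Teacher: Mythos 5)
Your computation is correct, and it is genuinely finer than what the paper does. I checked your closed form: writing $n=2^{\ell}+m$ with $0\le m<2^{\ell}$, Theorem~\ref{Megha0}, Proposition~\ref{Me420}, Lemma~\ref{carl}, and the splitting identity $\tfrac 12\big((a(\gs 1)+\tfrac 1n)^2+(a(\gs 2)+\tfrac 1n)^2\big)=(a(\gs)+\tfrac 1n)^2+9^{-(\ell+1)}$ (which shows $A$ depends on $I$ only through $m$, each split adding $9^{-(\ell+1)}/2^{\ell+1}$) indeed yield
\[
V_n-V_\infty=\frac{1}{2n}+\frac{1}{2n^{2}}+\frac{1}{16\cdot 9^{\ell}}-\frac{m}{18\cdot 18^{\ell}},
\]
which reproduces Proposition~\ref{Mega422} at $m=0$ and checks directly at small $n$ (e.g.\ $V_3-\tfrac{3}{16}=\tfrac{293}{1296}$). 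Since $n\le 2^{\ell+1}$ forces $n\cdot 9^{-\ell}\to 0$ and $nm\cdot 18^{-\ell}\to 0$, your conclusion $n(V_n-V_\infty)\to\tfrac 12$ is right, as is the dichotomy: the limit in \eqref{eq00100} is $\infty$ for $\gk<2$, equals $\tfrac 12$ at $\gk=2$, and is $0$ for $\gk>2$. Your asymptotics also independently reconfirm Theorem~\ref{theo2}, since $-\log(V_n-V_\infty)\sim\log(2n)$.

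The comparison with the paper's own proof is where the real finding lies. The paper never derives a general-$n$ formula: it uses only the dyadic values from Proposition~\ref{Mega422} together with the monotonicity squeeze $V_{2^{\ell(n)+1}}\le V_n\le V_{2^{\ell(n)}}$, and—this is the crux—it evaluates $\lim_{n\to\infty} n^{2}(V_n-V_\infty)$, not $\lim_{n\to\infty} n(V_n-V_\infty)$; both squeeze bounds $(2^{\ell})^{2}(V_{2^{\ell+1}}-V_\infty)$ and $(2^{\ell+1})^{2}(V_{2^{\ell}}-V_\infty)$ grow like $2^{\ell}$, whence $\infty$. So the paper proves the theorem for the normalizer $n^{D(P)}$ (exponent $\gk$), whereas \eqref{eq00100} prescribes $n^{2/\gk}=n$ at $\gk=D(P)=2$; what the paper computes is, in your notation, the $\gk=1$ case, and under the literal definition your value $\tfrac 12$ is the correct one. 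Your instinct that the stated $\infty$ requires a nonliteral reading is thus vindicated, with one correction: the reconciliation implicit in the paper is your secondary alternative (an exponent effectively below the critical value $2$), not your primary suggestion of dropping the subtraction of $V_\infty$—the subtraction is retained throughout the paper's proof.
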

\begin{proof}
For $n\in \D N$ with $n\geq 2$, let $\ell(n)$ be the unique natural number such that $2^{\ell(n)}\leq n<2^{\ell(n)+1}$. Then,
$V_{2(\ell(n)+1)}\leq V_n\leq V_{2{\ell(n)}}$, and $V_\infty=\lim_{n\to\infty} V_n=\frac 3{16}$. Since
\begin{align*}
&\lim_{n\to \infty} n^2 (V_n-V_\infty)\geq \lim_{n\to \infty} (2^{\ell(n)})^2 (V_{2^{\ell(n)+1}}-V_\infty)\\
&=\lim_{n\to\infty}(2\ell(n))^2\Big(\frac{1}{16} \left(2^{2-\ell(n)}+2^{3-2 (\ell(n)+1)}+9^{-\ell(n)-1}+3\right)-\frac{3}{16}\Big)=\infty, \te{ and } \\
&\lim_{n\to \infty} n^2 (V_n-V_\infty)\leq \lim_{n\to \infty} (2^{\ell(n)+1})^2 (V_{2^{\ell(n)}}-V_\infty)\\
&=\lim_{n\to\infty}(2^{\ell(n)+1})^2\Big(\frac{1}{16} \left(2^{3-2 \ell(n)}+2^{3-\ell(n)}+9^{-\ell(n)}+3\right)-\frac{3}{16}\Big) =\infty,
\end{align*}
by squeeze theorem, we have  $\lim_{n\to \infty} n^2 (V_n-V_\infty)=\infty$, which is the theorem. 
\end{proof}

\end{document}